\documentclass[11pt, oneside]{article}   	
\usepackage{geometry}                		
\geometry{letterpaper}                   		
\usepackage{graphicx}				
\usepackage{amssymb,amsthm}					
\usepackage{natbib}
\usepackage{amsmath}
\usepackage{epstopdf}
\usepackage{bbm}
\usepackage{mathtools}

\newcommand{\dotp}[2]{\left\langle #1, #2\right\rangle}

\newcommand{\abs}[1]{\left\vert#1\right\vert}

\newcommand{\norm}[1]{\big\Vert#1\big\Vert}

\newcommand \bbP{\mathbb{P}}
\newcommand \bbE{\mathbb{E}}

\def\m{\mathcal}
\def\mb{\mathbb}
\def\mr{\mathrm}
\def\mx{\mbox}

\newtheorem{theorem}{Theorem}[section]
\newtheorem{lemma}[theorem]{Lemma}
\newtheorem{proposition}[theorem]{Proposition}
\newtheorem{corollary}[theorem]{Corollary}




\begin{document}

\title{Optimal Bayesian estimation  in random covariate design with a rescaled Gaussian process prior}

\author{ {\bf Debdeep Pati}  \\ \texttt{debdeep@stat.fsu.edu}, Department of Statistics,  Florida State University\\
        {\bf Anirban Bhattacharya} \\  \texttt{anirbanb@stat.tamu.edu}, Department of Statistics,     Texas A\& M University\\
{\bf Guang Cheng} \\ \texttt{chengg@stat.purdue.edu}, Department of Statistics, Purdue University        }

\maketitle

\begin{abstract}
In Bayesian nonparametric models, Gaussian processes provide a popular prior choice for regression function estimation. Existing literature on the theoretical investigation of the resulting posterior distribution almost exclusively assume a fixed design for covariates. The only random design result we are aware of \citep{van2011information} assumes the assigned Gaussian process to be supported on the smoothness class specified by the true function with probability one. This is a fairly restrictive assumption as it essentially rules out the Gaussian process prior with a squared exponential kernel when modeling rougher functions. In this article, we show that an appropriate rescaling of the above Gaussian process leads to a rate-optimal posterior distribution even when the covariates are independently realized from a known density on a compact set. The proofs are based on deriving sharp concentration inequalities for frequentist kernel estimators; the results might be of independent interest.

\end{abstract}

{\bf Keywords:} Bayesian, convergence rate,  Gaussian process,  nonparametric regression, random design, rate-optimal

\section{Introduction}
Gaussian processes \citep{rasmussen2004gaussian,seeger2004gaussian,rasmussen2006gaussian} are widely used in the machine learning community as a principled probabilistic approach to function estimation.  A mean-zero Gaussian process is completely specified by its covariance kernel; popular choices include the squared-exponential and Mat\'{e}rn families.
Recently, there has been significant interest in frequentist convergence properties of Bayesian posteriors in Gaussian process models. \cite{ghosal2006posterior,choi2007posterior,tokdar2007posterior} established posterior consistency in a variety of settings including  nonparametric regression, classification and density estimation. \cite{seeger2008information} used an information criterion to evaluate closeness of the posterior distribution to the truth; see also \cite{van2011information}.
A major focus in the recent literature \citep{van2007bayesian,van2008rates,van2009adaptive,van2011information,aniso2014,shang2014nonparametric} has been on deriving the posterior convergence rate \citep{ghosal2000convergence}, which is defined as the minimum possible sequence $\epsilon_n \to 0$ such that for some constant $M > 0$,
\begin{align}\label{eq:post_con}
E_{\theta_0} \Pi(\norm{\theta - \theta_0} < M \epsilon_n \mid  \mathcal{D}_n) \to 1,
\end{align}
where $\mathcal{D}_n$ denotes the data, $\theta$ is the parameter of interest with some known transformation $ \Psi(\theta)$ assigned a Gaussian process prior, $\theta_0$ is the true data generating parameter and $\norm{\cdot}$ is a distance measure relevant to the statistical problem. In the context of nonparametric regression, classification and density estimation, it has been established that the posterior convergence rate based on appropriate Gaussian process priors coincides with the minimax optimal rate $n^{-\alpha/ (2 \alpha +d)}$ for $d$-variate $\alpha$-smooth functions up to a logarithmic factor \citep{van2007bayesian,van2008rates}, with rate-adaptivity to the unknown smoothness achieved in \cite{van2009adaptive,aniso2014}.

In this paper, we focus on a non-parametric regression problem,
\begin{align}\label{eq:sgp_def}
Y_i = f(X_i) + \epsilon_i, \quad \epsilon_i \sim \mx N(0, 1),
\end{align}
with $f$ assigned a mean-zero Gaussian process prior. The above-mentioned literature on posterior convergence rates under \eqref{eq:sgp_def} typically assume that the covariates $X_i$'s are fixed by design, in which the empirical $L_2$ norm $\norm{f - f_0}_{2,n} = (1/n\sum_{i=1}^n \abs{f(x_i) - f_0(x_i)}^2)^{1/2}$ is used as a discrepancy  measure in \eqref{eq:post_con}.  The empirical $L_2$ norm evaluates the discrepancy of the estimated function from the true function only at the observed data-points and is not suitable to assess out-of-sample predictive performance. In this paper, we consider a {\em random design} setup where the covariates $X_i$'s are drawn independently from a known distribution $q$, and derive the posterior convergence rates under an integrated $L_1(q)$ metric: $$\norm{f - f_0}_{1,q} = \int |f(x) - f_0(x) | q(x) dx.$$   The above integrated $L_1(q)$ metric is more relevant for studying statistical efficiency in a random design setting. From a technical standpoint, dealing with the integrated metric is challenging since one cannot directly leverage on properties of multivariate Gaussian distributions as in the case of the empirical $L_2$ norm to construct ``test functions''; a key ingredient in Bayesian asymptotics.

%

In the frequentist literature, existing results \citep{baraud2002model,brown2002asymptotic,birge2004model} on the convergence rates (with respect to an integrated metric) in random design regression require an appropriate lower bound on the smoothness of the underlying true function. For example, \cite{brown2002asymptotic,birge2004model} assumed that the univariate function $f_0$ belongs to a Lipschitz class with smoothness index $\alpha > 1/2$. Moreover, \cite{birge2004model} demonstrated the necessity of the $\alpha > 1/2$ condition by establishing a lower bound for the asymptotic risk for $\alpha \leq 1/2$. Similar lower bound condition will be assumed in our main Bayesian Theorem as well.

As far as we are aware, the only Bayesian literature considering the random design setting in (\ref{eq:sgp_def}) is \cite{van2011information} who assigned Gaussian processs with Mat\'{e}rn or squared exponential kernels. Specifically, they obtained an optimal rate $n^{-\alpha/(2 \alpha + d)}$ (up to a logarithmic factor, with respect to $L_2(q)$ norm) under a particularly strong assumption that the Gaussian process prior assigns probability one to the smoothness class containing the true function. Since the squared-exponential kernel has infinitely smooth sample paths, their result only delivers the optimal rate for analytic functions, but provides a highly suboptimal $(\log n)^{-t}$ rate for $\alpha$-smooth functions. This significantly limits the applicability of their result in the sense that it rules out the use of a squared-exponential kernel for less smooth (but more commonly used) functions.
An influential idea developed in \cite{van2007bayesian,van2009adaptive} is to scale the sample paths of a Gaussian process with a squared-exponential kernel to enable better approximation of $\alpha$-smooth functions.  The scaling is typically dependent on the smoothness of the true function and the sample size. However, Theorem 2 of  \cite{van2011information} is applicable only to priors  without scaling.  This is not evident from the statement of their theorem, but a closer inspection of their proof (ref. Page 2113) reveals that they have assumed $\tau^2:= \int \norm{f}_{\alpha | \infty}^2   d \Pi(f)$ to be a global constant for every $f$ in the support of the prior.  This may not hold for a rescaled Gaussian process.



In this article, we show that an appropriately rescaled Gaussian process prior with a squared-exponential covariance kernel leads to a rate-optimal posterior distribution (with respect to $L_1(q)$ norm) for any $\alpha$-smooth function $d$-variate $f_0$ in a random design setting  if $\alpha > d/2$. While \cite{van2011information} conjectured (see pp. 2103 after Theorem 2) that their smoothness assumption on the prior is unavoidable for the $L_2(q)$ norm, our result shows that this situation turns out to be different under the $L_1(q)$ norm. Specifically, we develop exponentially consistent test functions under the $L_1(q)$ norm using concentration inequalities for the Nadaraya--Watson kernel estimator. Existence of such test functions plays a key role in Bayesian asymptotic theory \citep{ghosal2000convergence}. For example, the classical Birg{\'e} -- Le Cam testing theory \citep{birge1984tests,le1986asymptotic} for the Hellinger metric provides appropriate tests in a wide variety of settings. \cite{gine2011rates} proposed an alternative framework for constructing tests based on concentration inequalities of frequentist estimators which is particularly useful for stronger norms; see also \cite{ray2013bayesian,PaBhPiDu2014,shang2014nonparametric}  for similar ideas in different contexts.

\section{Posterior convergence in random design regression}

\subsection{Notations}\label{sec:prelim}

Let $C[0, 1]^d$ and $C^{\alpha}[0, 1]^d$ denote the space of continuous functions and the H\"{o}lder space of $\alpha$-smooth functions $f: [0, 1]^d \to \mathbb{R}$, respectively, endowed with the supremum norm $\norm{f}_{\infty} = \sup_{ t \in [0, 1]^d } \abs{ f(t) }$. For $\alpha > 0$, the H\"{o}lder space $C^{\alpha}[0, 1]^d$ consists of functions $f \in C[0, 1]^d$ that have bounded mixed partial derivatives up to order  $\lfloor \alpha \rfloor$, with the partial derivatives of order $\lfloor \alpha \rfloor$ being Lipschitz continuous of order $\alpha - \lfloor \alpha \rfloor$. Let $\norm{\cdot}_1$ and $\norm{\cdot}_2$ respectively denote the $L_1$ and $L_2$ norm on $[0, 1]^d$ with respect to the Lebesgue measure (i.e., the uniform distribution). To distinguish the $L_2$ norm with respect to the Lebesgue measure on $\mb R^d$, we use the notation $\norm{\cdot}_{2, d}$.

We write ``$\precsim$'' for inequality up to a constant multiple. Let $\phi(t) = (2\pi)^{-1/2}\exp(-t^2/2)$ denote the standard normal density, and let $\phi_n(x) = \prod_{i=1}^n \phi(x_i)$ for $x \in \mb R^n$. Let a star denote a convolution, i.e., $f_1 \star f_2(y) = \int f_1(y-t) f_2(t) dt$. 
We denote the Fourier transform of $f$, whenever defined, by $\hat{f}$, with $\hat{f}(\lambda) = (2\pi)^{-d}\int \exp( i \dotp{\lambda}{t} ) f(t) dt$, where $\dotp{\lambda}{t}$ denotes the complex inner product. Under this convention, the inverse Fourier transform $f(t) = \int \exp( -i \dotp{\lambda}{t} ) \hat{f}(\lambda) d\lambda$ and $\hat{h} = (2 \pi)^d \hat{f} \hat{g}$ when $h = f \star g$.

Throughout $C, C', C_1, C_2, \ldots$ are generically used to denote positive constants whose values might change from one line to another, but are independent from everything else.  $Z_{1:n}$ is used as a shorthand for $Z_1, \ldots, Z_n$.

In the sequel, we consider a Gaussian process prior $\Pi$ on the regression function $f$ with $\bbE f(x) = 0$ and covariance kernel $c(x, x') = \mbox{cov}(f(x), f(x'))$. In particular, we focus on the squared-exponential kernel $c_a(x, x') = \exp(- a^2 \norm{x - x'}^2)$ indexed by an ``inverse-bandwidth'' parameter $a$. We next recall some important facts relevant to our setting from \cite{van2009adaptive} regarding the spectral measure and reproducing kernel Hilbert space of Gaussian process priors.
For the squared-exponential kernel $c_a$, the spectral measure $\mu_a$ admits a density $\omega_a$ with respect to Lebesgue measure, where $\omega_a(\lambda) = a^{-d} \omega(\lambda/a)$, with $\omega(\lambda) = \exp(- \| \lambda \|^2/4)/(2^d \pi^{d/2})$. The reproducing kernel Hilbert space $\mb H^a$  associated with a Gaussian process prior $\Pi$ consists of (real parts of) functions $h(t) = \int \exp( i \dotp{\lambda}{t} ) \xi(\lambda) d \mu_a(\lambda)$, where $\mu_a$ is the spectral measure of $\Pi$ and $\xi \in L_2(\mu_a)$. The squared Hilbert space norm of $h$ above is given by $\norm{h}_{\mb H^a}^2 = \norm{\xi \omega_a^{1/2}}_{2, d}^2 = \int \xi^2(\lambda) \omega_a(\lambda) d\lambda$; let $\mb H_1^a$ denote the unit ball of the reproducing kernel Hilbert space $\{h \in \mb H^a: \norm{h}_{\mb H^a} \leq 1\}$. Finally, let $\mb B_1$ denote the unit ball of $C[0, 1]^d$ with respect to the supremum norm.
For a detailed review of reproducing kernel Hilbert space of Gaussian process priors and connections with posterior contraction rates, kindly refer to \cite{van2008reproducing}.

\subsection{Main result}\label{sec:main}

Consider the nonparametric regression model \eqref{eq:sgp_def}.
We assume a random design setup, where given the regression function $f: [0, 1]^d \to \mb R$, the data $(X_1, Y_1), \ldots, (X_n, Y_n)$ are independently generated, with $X_i$ having a density $q$ on $[0, 1]^d$ that is bounded away from zero and infinity. Let $q(y, x) = q(y \mid x) q(x)$ denote the joint density of $(Y, X)$ given $f$, where $q(y \mid x) = \phi\{y - f(x)\}$. The joint data likelihood given $f$ is therefore
$$
q^{(n)}(Y_{1:n}, X_{1:n} \mid f) = \prod_{i=1}^n q(Y_i, X_i) = \prod_{i=1}^n \phi\{Y_i  - f(X_i)\}  q(X_i).
$$
Similarly, we define $q^{(n)}(Y_{1:n} \mid X_{1:n}, f)$ and $q^{(n)}(X_{1:n})$ as the density of $(Y_{1:n} \mid X_{1:n}, f)$ and $X_{1:n}$ respectively. Let $\bbE^f_{Y, X} (\bbP^f_{Y, X})$ denote an expectation (probability) with respect to $q^{(n)}(Y_{1:n}, X_{1:n} \mid f)$. Similarly define $\bbE_{Y \mid X}^f (\bbP_{Y \mid X}^f)$ and $\bbE_X^f (\bbP_X^f)$. When $f$ is clear from the context, we shall drop it from the superscript.

We assume a mean zero Gaussian process prior $\Pi$ on $f$ with a squared exponential kernel $\exp(- a_n^2 \norm{x - x'}^2)$ and denote the corresponding posterior measure by $\Pi(\cdot \mid Y_{1:n}, X_{1:n})$, so that
$$
\Pi(f \mid Y_{1:n}, X_{1:n}) \propto q^{(n)}(Y_{1:n} \mid X_{1:n}, f) \, \Pi(f).
$$
Assuming the true regression function is $f_0$, we study concentration of the posterior $\Pi(\cdot \mid Y_{1:n}, X_{1:n})$ in an $L_1(q)$ neighborhood of $f_0$. \begin{theorem}\label{thm:main}
 Assume that $f_0 \in C^{\alpha}[0, 1]^d$ with $\alpha > d/2$ and $\Pi$ is a mean-zero Gaussian process prior with a squared exponential covariance kernel $c(x, x') = \exp(- a_n^2 \norm{x - x'}^2)$. Set $a_n = n^{1/(2 \alpha + d)}$.  Then with $\epsilon_n =n^{-\alpha/ (2 \alpha +d)} \log^{3t_1/2}n$ for  $t_1 \geq (d+1)/2$, and some fixed sufficiently large constant $M > 0$,
\begin{eqnarray}
\bbE_{Y, X}^{f_0} \Pi \big(\norm{f - f_0}_{1,q} > M \epsilon_n \mid Y_{1:n}, X_{1:n} \big) \to 0.
\end{eqnarray}
\end{theorem}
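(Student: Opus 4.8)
The plan is to reduce the statement to the three hypotheses of the generic posterior contraction rate theorem of \citet{ghosal2000convergence}, using $\norm{f-f_0}_{1,q}$ itself as the semimetric: (a) a prior–mass bound on a Kullback--Leibler type neighbourhood of $f_0$ at rate $\epsilon_n$; (b) a sieve $\mathcal F_n\subseteq C[0,1]^d$ with $\Pi(\mathcal F_n^{c})\precsim e^{-Cn\epsilon_n^{2}}$ and $\log N(\epsilon_n,\mathcal F_n,\norm{\cdot}_{1,q})=o(n\epsilon_n^{2})$; and (c) tests $\psi_n$ of $f=f_0$ against $\{f\in\mathcal F_n:\norm{f-f_0}_{1,q}>M\epsilon_n\}$ with $\bbE_{Y,X}^{f_0}\psi_n\to0$ and $\sup\bbE_{Y,X}^{f}(1-\psi_n)\precsim e^{-Cn\epsilon_n^{2}}$, $C$ as large as needed. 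Given (a)--(c), the conclusion follows from the routine decomposition of the posterior numerator over the tested part of $\mathcal F_n$, the untested part of $\mathcal F_n$, and $\mathcal F_n^{c}$, with the normalising constant bounded below through (a). Ingredients (a) and (b) are, up to bookkeeping of logarithmic factors, the rescaled squared-exponential process machinery of \citet{van2008reproducing,van2009adaptive}; the genuinely new point, and the hard one, is (c), which cannot be imported from Gaussian likelihood-ratio testing because the $L_1(q)$ geometry is insensitive to it.

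For (a), the known factor $q(x)$ cancels in the log-likelihood ratio, so for $\norm{f-f_0}_\infty\le1$ both the expected log-likelihood ratio and its variance are bounded by a constant multiple of $n\norm{f-f_0}_{2,q}^{2}\le n\norm{f-f_0}_\infty^{2}$; hence the relevant neighbourhood contains $\{f:\norm{f-f_0}_\infty<c\epsilon_n\}$, whose prior probability is at least $\exp\{-\phi_{f_0}(c\epsilon_n/2)\}$, and for $f_0\in C^{\alpha}$ with $a_n=n^{1/(2\alpha+d)}$ one has $\phi_{f_0}(c\epsilon_n/2)\precsim a_n^{d}(\log n)^{1+d}\precsim n\epsilon_n^{2}$ over the stated range of $t_1$. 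For (b) take $\mathcal F_n=M_n\mb H_1^{a_n}+\epsilon_n\mb B_1$ with $M_n\asymp a_n^{d/2}(\log n)^{3t_1/2}$; Borell's inequality gives $\Pi(\mathcal F_n^{c})\precsim e^{-Cn\epsilon_n^{2}}$, and $\log N(\epsilon_n,\mathcal F_n,\norm{\cdot}_\infty)\precsim a_n^{d}(\log n)^{1+d}=o(n\epsilon_n^{2})$, which also controls the $\norm{\cdot}_{1,q}$-entropy since $\norm{\cdot}_{1,q}\le\norm{\cdot}_\infty$. The one fact to carry into step (c) is that every $f\in\mathcal F_n$ satisfies $\norm{f}_\infty\le B_n:=M_n+\epsilon_n\asymp a_n^{d/2}(\log n)^{3t_1/2}$, so that $B_n\epsilon_n\asymp a_n^{d/2-\alpha}(\log n)^{3t_1}\to0$ \emph{exactly because} $\alpha>d/2$.

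For (c), cover $\{f\in\mathcal F_n:\norm{f-f_0}_{1,q}>M\epsilon_n\}$ by $\norm{\cdot}_\infty$-balls of radius $c_0\epsilon_n$ (with $c_0=M/4$, say) centred at a maximal separated subset $f_1,\dots,f_N$ of that set, so $\norm{f_j-f_0}_{1,q}>M\epsilon_n$ for each $j$ and $\log N\precsim a_n^{d}(\log n)^{1+d}$. For each $f_j$ I would take the sign-matched linear statistic $T_{n,j}=n^{-1}\sum_i\{Y_i-f_0(X_i)\}\,\mathrm{sign}\{f_j(X_i)-f_0(X_i)\}$ and reject when $T_{n,j}>\tfrac12\norm{f_j-f_0}_{1,q}$. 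Then: under $f_0$, $T_{n,j}$ is mean zero and conditionally Gaussian with variance $O(1/n)$, giving type-I error $\le e^{-cM^{2}n\epsilon_n^{2}}$; for $f$ in the $j$-th ball, $\bbE_{Y,X}^{f}T_{n,j}=\bbE[(f-f_0)\mathrm{sign}(f_j-f_0)]\ge\norm{f_j-f_0}_{1,q}-c_0\epsilon_n\ge\tfrac34\norm{f_j-f_0}_{1,q}$, while a Bernstein inequality for the residual part --- with variance proxy $\norm{f-f_0}_{2,q}^{2}\le2B_n\norm{f-f_0}_{1,q}$ and envelope $2B_n$ --- bounds the type-II error by $\exp\{-c\,n\norm{f_j-f_0}_{1,q}/B_n\}\le\exp\{-c\,nM\epsilon_n/B_n\}$. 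Since $nM\epsilon_n/B_n=M(B_n\epsilon_n)^{-1}\cdot n\epsilon_n^{2}$ and $B_n\epsilon_n\to0$ under $\alpha>d/2$, this exponent dominates $n\epsilon_n^{2}$ by a diverging factor, so after a union bound over $j$ both error probabilities are $\precsim e^{-Cn\epsilon_n^{2}}$ with $C$ arbitrarily large, establishing (c). (The paper instead routes this through a Nadaraya--Watson kernel estimator of $f$, for which it derives a sharp, uniform-in-$x$ exponential lower bound on the random denominator $n^{-1}\sum_i K_h(x-X_i)$ --- using that $q$ is bounded away from $0$ and $\infty$ --- the concentration inequality advertised in the abstract; the key arithmetic is the same.)

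I expect step (c) to be the crux. Two opposing difficulties must be reconciled: first, a ``spiky'' alternative can be far from $f_0$ in $\norm{\cdot}_{1,q}$ while its Hellinger distance to $f_0$ stays tiny --- the Gaussian location likelihood saturates --- so Birg\'e--Le Cam/likelihood-ratio tests are useless; second, the Gaussian-process sieve unavoidably contains functions with $\norm{f}_\infty$ of polynomial order $a_n^{d/2}$, so a crude envelope in any Hoeffding/Bernstein bound buys only a polylogarithmic exponent, far short of $n\epsilon_n^{2}$. The resolution threads the needle by using a Bernstein bound whose effective exponent is $n\norm{f-f_0}_{1,q}/B_n$ rather than $n\norm{f-f_0}_{1,q}^{2}$, and then observing that $n\epsilon_n/B_n\succsim n\epsilon_n^{2}$ is equivalent to $B_n\epsilon_n\precsim1$, precisely the regime forced by $\alpha>d/2$ --- mirroring the $\alpha>1/2$ necessity in the frequentist random-design literature \citep{birge2004model}. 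Beyond this observation, the main technical labour is establishing the Nadaraya--Watson denominator concentration with an exponential tail sharp enough (uniformly in $x$ and over the sieve) to sustain the argument, and the careful tracking of logarithmic factors needed to keep the entropy $o(n\epsilon_n^{2})$ while preserving the prior-mass bound.
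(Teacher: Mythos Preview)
Your argument is correct and the critical arithmetic---that the Bernstein exponent for the sign-matched test scales like $n\norm{f_j-f_0}_{1,q}/B_n$ and that $n\epsilon_n/B_n\gg n\epsilon_n^2$ precisely when $B_n\epsilon_n\to0$, i.e.\ when $\alpha>d/2$---is sound. The prior-mass and sieve steps are the standard rescaled squared-exponential machinery, as you note.

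Your route to the tests is, however, genuinely different from the paper's. The paper does \emph{not} cover the sieve and then do a union bound over sign-matched linear statistics. Instead it builds a \emph{single} plug-in test $\Phi_n=\mathbbm{1}(\norm{\tilde f_n-f_0}_1>M\epsilon_n/2)$ from the kernel estimator $\tilde f_n(x)=n^{-1}\sum_i\psi_{\sigma_n}(x-X_i)Y_i$ (no denominator---your parenthetical describing a Nadaraya--Watson random denominator and a uniform-in-$x$ lower bound is not what the paper does). The paper then proves two $L_1$-concentration bounds uniformly over the sieve: for the conditional-on-$X$ noise part it uses Borell's Gaussian inequality after identifying $\norm{\cdot}_1$ with the supremum of a Gaussian process via Hahn--Banach; for the design-randomness part it invokes Bousquet's version of Talagrand's inequality, and the key technical input is not the sup-norm bound $\norm{f}_\infty\le B_n$ you use but rather a uniform $L_2$ bound $\sup_{f\in\m P_n}\norm{f}_2\le C$ (Proposition~\ref{prop:l2normbd}), proved by a Fourier-analytic calculation in the RKHS. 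The bias condition is handled separately by showing $\norm{\psi_{\sigma_n}\star h-h}_\infty\le\epsilon_n$ for $h\in M_n\mb H_1^{a_n}$, again via Fourier analysis.

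What each approach buys: yours is more elementary---no Talagrand, no kernel, no Fourier bias analysis---and makes the role of $\alpha>d/2$ completely transparent through the single inequality $B_n\epsilon_n\to0$. The paper's approach avoids the covering/union-bound step (hence is tied to $M$ only through the prior-mass constant, not through beating the entropy), and delivers, as the abstract advertises, concentration inequalities for kernel estimators that are of independent interest; it also shows that the uniform $L_2$ control on the sieve, rather than the cruder sup-norm control, is what drives the variance term in Talagrand's inequality down to $O(n)$. Both arguments collapse at $\alpha\le d/2$ for the same structural reason: the sieve's ``size'' in the relevant norm times $\epsilon_n$ fails to vanish.
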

As stated previously, the condition $\alpha > d/2$ is necessary to obtain the optimal rate.  \cite{van2009adaptive} showed that the squared-exponential covariance kernel without rescaling leads to a very slow $(\log n)^{-l}$ contraction rate for $\alpha$-smooth functions both in the fixed and random design settings. This is not surprising as the sample paths of such a GP are analytic. The effect of scaling the prior using the ``inverse bandwidth'' $a$ to yield the optimal posterior concentration was first noted by \cite{van2007bayesian} in a fixed design context, who showed (for $d = 1$) that a deterministic scaling $a_n = n^{1/(2\alpha+1)}$ produces priors that are suitable for modeling $\alpha$-smooth functions. Theorem \ref{thm:main} assures that the same rescaling idea continues to work in the {\it random} design setting for an integrated $L_1$ norm.

The optimal rescaling in Theorem \ref{thm:main} requires knowledge of the true smoothness $\alpha$. If there is a mismatch between the prior regularity and the function smoothness, one would typically expect a sub-optimal rate. Corollary \ref{cor:main} quantifies this fact; while we only derive an upper bound to the posterior convergence rate, such bounds are usually tight \citep{van2009adaptive}. In absence of any prior knowledge regarding the smoothness, one may resort to an empirical or fully Bayes approach as in \cite{van2009adaptive,szabo2013empirical}. The related theoretical investigation will be considerably harder in such cases.

\begin{corollary}\label{cor:main}
Under the conditions of Theorem \ref{thm:main}, if $a_n = n^{1/ (2\beta +d)}$ for $\beta > d/2, \beta \neq \alpha$, the conclusion of Theorem \ref{thm:main} holds with $\epsilon_n =  n^{-\alpha \wedge \beta/ (2\beta+d)} \log^{3t_1/2}n$ for  $t_1 \geq d/(4 - 2\kappa)$ for any $0 < \kappa < 2$. 
\end{corollary}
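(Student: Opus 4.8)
The plan is to re-run the proof of Theorem~\ref{thm:main} with the mismatched inverse-bandwidth $a_n = n^{1/(2\beta+d)}$ substituted for $n^{1/(2\alpha+d)}$, and to check that the only quantities that change — the small-ball/approximation ``concentration function'' of the rescaled Gaussian process and the metric entropy of its sieve — still stay within the budget $n\epsilon_n^2$ for the claimed $\epsilon_n$, while the $L_1(q)$ tests built from the Nadaraya--Watson estimator carry over verbatim. Recall the three ingredients of the general posterior-contraction scheme \cite{ghosal2000convergence}: (i) a prior-mass condition $\Pi(f:\|f-f_0\|_\infty<\epsilon_n)\ge e^{-c\,n\epsilon_n^2}$; (ii) sieves $\mathcal F_n$ with $\Pi(\mathcal F_n^c)\le e^{-C\,n\epsilon_n^2}$ and $\log N(\epsilon_n,\mathcal F_n,\|\cdot\|_\infty)\precsim n\epsilon_n^2$; and (iii) exponentially consistent tests of $f_0$ against $\{f:\|f-f_0\|_{1,q}>M\epsilon_n\}$. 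Ingredient (iii) does not involve the prior and is imported unchanged from the proof of Theorem~\ref{thm:main}; it only needs $\epsilon_n$ to dominate the Nadaraya--Watson estimation rate, which holds because $(\alpha\wedge\beta)/(2\beta+d)\le\alpha/(2\alpha+d)$, so $n^{-(\alpha\wedge\beta)/(2\beta+d)}$ is never smaller than the minimax-optimal $n^{-\alpha/(2\alpha+d)}$ (the hypothesis $\beta>d/2$ being what keeps the rescaled process and its sieve within the scope of that test construction).

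For (i) I would invoke the concentration-function estimates of \cite{van2009adaptive} for the squared-exponential process with inverse-bandwidth $a_n$: for $f_0\in C^{\alpha}[0,1]^d$ there is $h\in\mb H^{a_n}$ with $\|h-f_0\|_\infty\precsim a_n^{-\alpha}$ and $\|h\|_{\mb H^{a_n}}^2\precsim a_n^d$ (up to logarithmic factors), while the centered small-ball exponent obeys $-\log\Pi(\|f\|_\infty<\epsilon)\precsim a_n^{d}\{\log(a_n/\epsilon)\}^{1+d}$. With $a_n=n^{1/(2\beta+d)}$ one has $a_n^{d}=n^{d/(2\beta+d)}$, so the concentration function at level $\epsilon_n$ is $\precsim n^{d/(2\beta+d)}\log^{1+d}n$; requiring this to be $\le c\,n\epsilon_n^2$ forces $\epsilon_n\gtrsim n^{-\beta/(2\beta+d)}$ up to a log factor, and control of the approximation term additionally forces $\epsilon_n\gtrsim a_n^{-\alpha}=n^{-\alpha/(2\beta+d)}$. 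Because the exponents are negative the binding constraint is $\epsilon_n\gtrsim\max\{n^{-\beta/(2\beta+d)},n^{-\alpha/(2\beta+d)}\}=n^{-(\alpha\wedge\beta)/(2\beta+d)}$, which fixes the polynomial rate. The sharpened logarithmic exponent $3t_1/2$ and the threshold $t_1\ge d/(4-2\kappa)$ I expect to come out of balancing the polylogarithmic slack in the bias level $a_n^{-\alpha}$ — which may be lowered by undersmoothing the approximant at the price of a marginally larger complexity, a trade parametrized by $\kappa$ — against the $\log^{1+d}n$ factor in the small-ball bound.

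For (ii) I would use the standard sieve $\mathcal F_n=M_n\mb H_1^{a_n}+\delta_n\mb B_1$ with $M_n^2\asymp n\epsilon_n^2$ and $\delta_n$ a suitable polynomially decaying sequence; Borell's inequality gives $\Pi(\mathcal F_n^c)\le e^{-C\,n\epsilon_n^2}$, and the entropy estimate of \cite{van2009adaptive} yields $\log N(\delta_n,\mathcal F_n,\|\cdot\|_\infty)\precsim a_n^{d}\{\log(M_n a_n/\delta_n)\}^{1+d}\precsim n^{d/(2\beta+d)}\log^{1+d}n$, which is $\precsim n\epsilon_n^2$ for the $\epsilon_n$ identified above, and which also feeds the union bound underlying ingredient (iii). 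One also checks, as in the proof of Theorem~\ref{thm:main}, that the prior puts enough mass on a suitable Kullback--Leibler-type neighborhood of $f_0$ so that the posterior denominator is bounded below with high probability. Combining (i)--(iii) with the general theorem then yields $\bbE_{Y,X}^{f_0}\Pi(\|f-f_0\|_{1,q}>M\epsilon_n\mid Y_{1:n},X_{1:n})\to0$.

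The one genuinely delicate point is the bookkeeping of logarithmic factors in (i)--(ii): one must confirm that the concentration function and the sieve entropy remain below $n\epsilon_n^2$ with the exponent $3t_1/2$, and extract the exact admissible range $t_1\ge d/(4-2\kappa)$, which is where the tuning parameter $\kappa$ enters through the freedom to undersmooth the approximating element of $\mb H^{a_n}$; care is needed that the slack spent to reduce the log power in the prior-mass step is not overspent so as to break the entropy or the testing bounds. Everything else is a transcription of the proof of Theorem~\ref{thm:main} with $\alpha$ replaced by $\beta$ in the definition of $a_n$.
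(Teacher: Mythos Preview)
Your high-level plan --- re-run the verification underlying Theorem \ref{thm:main} with $a_n=n^{1/(2\beta+d)}$ and read off the binding rate from the concentration function --- is exactly what the paper does, and your derivation of the polynomial part $n^{-(\alpha\wedge\beta)/(2\beta+d)}$ from the two competing constraints (approximation $a_n^{-\alpha}$ versus small-ball/complexity $a_n^d$) is correct.

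The gap is in ingredient (iii): the tests are \emph{not} imported unchanged, and this is precisely where $\kappa$ enters. Recall that (BS) requires, for every $f\in\m P_n$, the bias bound $\|\psi_{\sigma_n}\star h-h\|_\infty\le\epsilon_n$ for the RKHS element $h$ near $f$, obtained via
\[
|\psi_{\sigma_n}\star h(t)-h(t)|^2 \le C\,a_n^{d}\exp\{-\sigma_n^{-2}/(4a_n^2)\}.
\]
If you keep the kernel bandwidth $\sigma_n=n^{-1/(2\alpha+d)}\log^{-t_2}n$ from Theorem \ref{thm:main} while $a_n=n^{1/(2\beta+d)}$ with $\beta<\alpha$, then $\sigma_n^{-1}/a_n\to0$ and this bound is useless; (BS) fails on the new sieve. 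The paper therefore recalibrates $\sigma_n=n^{-1/(2\beta+d)}\log^{-t_2}n$ with $t_2=1/(2-\kappa)$, $0<\kappa<2$, so that $\sigma_n^{-1}/a_n=\log^{t_2}n\to\infty$. The constraint $t_1\ge d/(4-2\kappa)=t_2 d/2$ then arises from (DT)/(DS), namely from the requirement $(n/\sigma_n^d)^{1/2}\precsim n\epsilon_n$ in Lemmata \ref{lem:devygx}--\ref{lem:devx}. So $\kappa$ parametrizes the logarithmic slack in the Nadaraya--Watson bandwidth, not ``undersmoothing the approximant in $\mb H^{a_n}$'' as you conjecture; the reason the log exponent can be sharpened relative to Theorem \ref{thm:main} is that the sub-optimal polynomial rate absorbs the $(d+1)/2$ contribution from (PCN), leaving only the test-side constraint $t_2 d/2$.

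A smaller point: the paper's scheme (Theorem \ref{thm:main_cond}) does not use metric entropy or a covering-plus-union-bound at all. The concentration inequalities for $\|\tilde f_n-\bbE\tilde f_n\|_1$ are already uniform over $\m P_n$ (via Proposition \ref{prop:l2normbd} and the sup-norm bound $\|f\|_\infty\le 2M_n$), so your entropy bookkeeping is extraneous. The role of $\beta>d/2$ is again on the testing side: it ensures $K_1 t\precsim M_n\,n\epsilon_n^2\le n\epsilon_n$ in the Talagrand step of Lemma \ref{lem:devx}.
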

Observe that the optimal rate $n^{-\alpha/(2\alpha+d)}$ is attained (upto logarithmic terms) if and only if $\alpha = \beta$. A scaling $n^{1/ (2\beta + d)}$ for $\beta < \alpha$ makes the prior rougher compared to the true function while $\beta > \alpha$ renders smoother prior realizations. In both cases, sub-optimal rates are obtained.   This is in accordance with the findings for GP priors with Mat\'{e}rn covariance kernel; refer to Theorem 5 in \cite{van2011information}.

Note that by taking $\kappa$ very close to $0$, we can improve on the power of the $\log n$ term in Corollary \ref{cor:main}  from that in Theorem \ref{thm:main}.
The  difference in the power of  $\log n$  stems from the fact that the corollary only targets sub-optimal rates as opposed to Theorem \ref{thm:main}. Hence a portion of the power of $\log n$ can be eliminated in Corollary \ref{cor:main}.

\subsection{Contributions beyond literature}
The proof of Theorem \ref{thm:main} follows from a general set of sufficient conditions for posterior concentration in model \eqref{eq:sgp_def}; kindly refer to Theorem \ref{thm:main_cond} stated in the next Section. In particular, we exploit
concentration inequalities for suitable kernel estimators to construct the aforementioned exponentially consistent sequence of test functions.  Such techniques have been used previously to show convergence rates in density estimation \citep{gine2011rates} and in linear inverse problems \citep{ray2013bayesian}. Their techniques do not directly apply to our case partly due to the lack of concentration bounds for kernel based estimators. \cite{gine2011rates,ray2013bayesian} construct estimators based on truncated spectral representations which are well suited to sieve priors.  However, to deal with a Gaussian process prior with a squared-exponential covariance kernel, we need to construct test functions based on the Nadaraya--Watson kernel estimator and derive sharp concentration bounds for this class of estimators in Lemma \ref{lem:devygx} and \ref{lem:devx}.


The choice of the norm dictating the neighborhood around the true parameter plays a critical role in Bayesian asymptotics. A fundamental tool for relating the likelihood ratio with the neighborhood in consideration is a sequence of exponentially consistent test functions \citep{ghosal2000convergence}. In a regression setting, such test functions are guaranteed to exist for the empirical $L_2$  norm by exploiting a direct relation between the  empirical $L_2$  norm and the likelihood ratio of  the multivariate Gaussian densities involved; refer to Section 4 of \cite{van2011information}. However, the integrated norm involves covariate points different from the observations, which makes the problem more challenging. \cite{van2011information} applied Bernstein's inequality to extrapolate to the $L_2(q)$ norm from the empirical $L_2$ norm. However, as stated in the Introduction, their approach only works for priors that are supported on the true smoothness class with probability one.



Among other related work, Section 4 of \cite{kleijn2006misspecified} considers random design regression, where a correspondence between the Kullback--Leibler and $L_2(q)$ neighborhood is established to derive the test function, assuming the prior support consists of uniformly bounded functions. However, this assumption does not hold for the rescaled Gaussian process prior in Theorem \ref{thm:main}. In particular, the {\em sieves} constructed in \cite{van2007bayesian} of the form $M_n \mb H_1^{a_n} + \epsilon_n \mb B_1$ with $M_n \to \infty$ do not correspond to sup-norm bounded subsets of $C[0, 1]^d$.

We comment here that convergence in the integrated metric has been settled in the binary regression setting. Using a logistic link function, a direct agreement can be established between the integrated $L_1$ metric on the function space and the Hellinger distance between the resulting densities arising from the Bernoulli likelihood; see for example, Section 3.2 of \cite{van2008rates}. Second, in this paper we implicitly refer to Gaussian processs which are specified by a kernel function; specifically, kernel functions which do not admit a finite series representation, such as the squared-exponential kernel. If a Gaussian process is specified via a truncated orthogonal series representation with independent Gaussian priors on the coefficients, the integrated metric can be related to the $L_2$ norm of the coefficient vector \citep{bontemps11}.

\section{Auxiliary results} \label{sec:aux}
We now state a general theorem which presents a set of sufficient conditions for proving Theorem \ref{thm:main}. From now onwards, we shall assume the covariate distribution $q$ to be a uniform distribution on $[0, 1]^d$ for notational simplicity; modifying our construction to a general $q$, which is bounded from above and below, is straightforward. The $L_1(q)$ norm $\norm{\cdot}_{1,q}$ with $q$ the uniform distribution on $[0, 1]^d$ shall be simply denoted by $\norm{\cdot}_1$ following our convention in Section 2.1.
A proof of Theorem \ref{thm:main_cond} can be found in the Appendix.
\begin{theorem}\label{thm:main_cond}
Let $\epsilon_n', \delta_n$ be sequences such that $\epsilon_n', \delta_n \to 0$ and $n\epsilon_n'^2, n \delta_n^2 \to \infty$. Let $U_n = \{ f: \| f - f_0 \|_1 > M \epsilon_n'\}$ for some fixed $M > 0$. Suppose that there exists a sequence of estimators $\tilde{f}_n$ for $f$ based on $(Y_{1:n}, X_{1:n})$ and a sequence of subsets/sieves $\m P_n$ of $C[0, 1]^d$ such that
\begin{align}
& \Pi(\m P_n^c) \leq \exp\{-(C+4) n \delta_n^2\}, \tag{PCS} \\
& \norm{\bbE_{Y,X}^{f_0} \tilde{f}_n - f_0}_1 < \epsilon_n', \tag{BT} \\
& \bbP_{Y, X}^{f_0} \bigg( \norm{\tilde{f}_n - \bbE_{Y, X}^{f_0} \tilde{f}_n}_1 > \epsilon_n' \bigg) \leq \exp\{-(C + 4) n \delta_n^2\}, \tag{DT} \\
& \sup_{f \in \m P_n \cap U_n} \norm{\bbE_{Y,X}^{f} \tilde{f}_n - f}_1 < \epsilon_n', \tag{BS} \\
& \sup_{f \in \m P_n \cap U_n} \bbP_{Y, X}^{f} \bigg( \norm{\tilde{f}_n - \bbE_{Y, X}^{f} \tilde{f}_n}_1 > \epsilon_n' \bigg) \leq \exp\{-(C + 4) n \delta_n^2\}, \tag{DS} \\
& \Pi \bigg(\norm{f - f_0}_{\infty} \leq \delta_n \bigg) \geq \exp\{-n \delta_n^2\}. \tag{PCN}
\end{align}
Then, $\bbE_{Y, X}^{f_0} \Pi \big(U_n \mid Y_{1:n}, X_{1:n} \big) \to 0$.
\end{theorem}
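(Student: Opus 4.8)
The plan is to follow the standard Ghosal--Ghosh--van der Vaart testing-based template for posterior contraction, but with the test functions supplied by the frequentist estimator $\tilde f_n$ rather than by Birg\'e--Le Cam Hellinger testing. First I would write the posterior mass of $U_n$ as the usual ratio
\begin{align*}
\Pi(U_n \mid Y_{1:n}, X_{1:n}) = \frac{\int_{U_n} \prod_{i=1}^n \frac{q(Y_i \mid X_i, f)}{q(Y_i \mid X_i, f_0)} \, d\Pi(f)}{\int \prod_{i=1}^n \frac{q(Y_i \mid X_i, f)}{q(Y_i \mid X_i, f_0)} \, d\Pi(f)} =: \frac{N_n}{D_n},
\end{align*}
noting that the covariate density $q(X_i)$ cancels from numerator and denominator. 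The strategy is to (i) lower bound the denominator $D_n \geq \exp\{-(C+2) n\delta_n^2\}$ on an event of probability tending to one using the prior mass condition (PCN) near $f_0$ in the supremum norm, and (ii) upper bound the numerator by splitting $U_n = (U_n \cap \m P_n) \cup \m P_n^c$ and controlling each piece separately; then multiply by a test and its complement.

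For the denominator, the supremum-norm ball $\{\norm{f - f_0}_\infty \le \delta_n\}$ controls both the Kullback--Leibler divergence and the KL variation of the Gaussian regression densities $q(y\mid x, f)$ versus $q(y\mid x,f_0)$ up to a constant multiple of $\delta_n^2$ (for the $\mx N(f(x),1)$ family, $\mathrm{KL} = \tfrac12(f(x)-f_0(x))^2$ and the second moment of the log-likelihood ratio is of the same order), so the classical evidence lower bound lemma (e.g. Lemma 8.1 of Ghosal--Ghosh--van der Vaart) gives $\bbP^{f_0}_{Y,X}(D_n \le \exp\{-(C+2)n\delta_n^2\}) \to 0$ from (PCN). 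For the numerator, the key device is the test $\varphi_n = \mathbbm 1\{\norm{\tilde f_n - \bbE^{f_0}_{Y,X}\tilde f_n}_1 > \epsilon_n'\}$. On $\{\varphi_n = 0\}$, the triangle inequality together with the bias bound (BT), $\norm{\bbE^{f_0}_{Y,X}\tilde f_n - f_0}_1 < \epsilon_n'$, forces $\norm{\tilde f_n - f_0}_1 < 2\epsilon_n'$; hence for any $f \in U_n \cap \m P_n$ (so $\norm{f - f_0}_1 > M\epsilon_n'$), taking $M$ large gives $\norm{\tilde f_n - f}_1 \gtrsim M\epsilon_n'$, which via (BS) and (DS) has $\bbP^f_{Y,X}$-probability at most $\exp\{-(C+4)n\delta_n^2\}$ uniformly over $f \in U_n \cap \m P_n$. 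Fubini then bounds $\bbE^{f_0}_{Y,X}[(1-\varphi_n)\int_{U_n\cap\m P_n}\prod_i \tfrac{q(Y_i\mid X_i,f)}{q(Y_i\mid X_i,f_0)}\,d\Pi(f)]$ by $\exp\{-(C+4)n\delta_n^2\}$, after noting the likelihood ratio integrates against $q^{(n)}(\cdot\mid X_{1:n},f_0)$ to convert to $\bbP^f$. The off-sieve piece $\int_{\m P_n^c}\prod_i\tfrac{q(Y_i\mid X_i,f)}{q(Y_i\mid X_i,f_0)}\,d\Pi(f)$ has $\bbE^{f_0}_{Y,X}$-expectation equal to $\Pi(\m P_n^c) \le \exp\{-(C+4)n\delta_n^2\}$ by (PCS) and Fubini. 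Finally $\bbE^{f_0}_{Y,X}\varphi_n \to 0$ directly from (DT) and (BT). Combining: on the good event for $D_n$, $\Pi(U_n\mid\cdot) \le \varphi_n + \exp\{-(C+2)n\delta_n^2\}\cdot(\text{numerator bounds})$, and taking expectations and using $C$ large enough so that $(C+4)-(C+2) = 2 > 0$ makes every term vanish.

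The main subtlety — the step I would be most careful about — is keeping the roles of $\epsilon_n'$ and $\delta_n$ straight: the contraction radius $\epsilon_n'$ drives the test, but the exponential rate in (PCS)/(DT)/(DS) and the prior mass in (PCN) are governed by $\delta_n$, and one needs $n\delta_n^2 \to \infty$ together with the constant $C$ chosen large enough (relative to the universal constant appearing in the evidence lower bound) so that the $\exp\{-(C+4)n\delta_n^2\}$ numerator bounds beat the $\exp\{-(C+2)n\delta_n^2\}$ denominator lower bound with room to spare. The other point requiring care is the measure-change in the Fubini step: because the data law is $q^{(n)}(Y_{1:n}\mid X_{1:n},f_0)$ but (DS) is stated under $\bbP^f_{Y,X}$, one must verify that integrating the likelihood ratio $\prod_i q(Y_i\mid X_i,f)/q(Y_i\mid X_i,f_0)$ against the $f_0$-law exactly reproduces integration of the event probability against the $f$-law — this is immediate since the $X_i$-marginals are common, but should be noted. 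No concentration inequalities for $\tilde f_n$ are needed here; those are deferred to the verification of (BT)--(DS) in the proof of Theorem \ref{thm:main}.
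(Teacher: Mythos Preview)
Your proposal is correct and follows essentially the same approach as the paper: the paper's Appendix proves Theorem~\ref{thm:main_cond} via exactly the Ghosal--Ghosh--van der Vaart decomposition you describe, with the test $\Phi_n = \mathbbm{1}(\|\tilde f_n - f_0\|_1 > M\epsilon_n'/2)$ (equivalent to yours by the triangle inequality and (BT)) and the evidence lower bound obtained by conditioning on $X_{1:n}$ and passing from the empirical $L_2$ ball to the sup-norm ball via $\Pi(\|f-f_0\|_\infty \le \delta_n) \le \Pi(\|f-f_0\|_{2,n} \le \delta_n)$. The only cosmetic differences are that the paper centers its test at $f_0$ rather than at $\bbE^{f_0}_{Y,X}\tilde f_n$, and invokes Lemma~14 of \cite{van2011information} for the denominator in place of the general GGV lemma you cite.
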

Condition (PCS) implies that the prior probability of the complement of the sieve $\m P_n$ is exponentially small. Condition (BT) assumes a sufficiently accurate estimator $\tilde{f}_n$ with bias smaller than $\epsilon_n$ at $f_0$ while (DT) assumes an exponential concentration bound of $\tilde{f}_n$ from its expectation under $q^{(n)}(\cdot \mid f_0)$. (BS) and (DS) assume similar conditions as (BT) and (DT) under $q^{(n)}(\cdot \mid f)$ for any $f \in \m P_n \cap U_n$. The conditions (BT), (DT); (BS), (DS) jointly guarantee the existence of exponentially consistent test functions; see Lemma \ref{lem:test} in the Appendix. Condition (PCN) assumes that the prior $\Pi$ places ``enough'' mass in an $\epsilon_n$-neighborhood of the truth $f_0$ in terms of the sup-norm.

\subsection{Verifying the conditions of Theorem \ref{thm:main_cond} to prove Theorem \ref{thm:main}}

Letting $\delta_n = \epsilon_n' = \epsilon_n$ with $\epsilon_n'$ and $\epsilon_n$ as in the statement of Theorem \ref{thm:main_cond} and Theorem \ref{thm:main} respectively, we now proceed to construct $\m P_n$ and $\tilde{f}_n$ that satisfy the conditions of Theorem \ref{thm:main_cond}. While we choose the same sieve as in \cite{van2007bayesian}, part of the technical challenge lies in the fact that the concentration bounds need to be derived not just for the truth, but rather for every function in the sieve. This requires precise control on the {\em size} of the functions in the sieve $\m P_n$. We show in Proposition \ref{prop:l2normbd} below that the functions in the chosen sieve are uniformly bounded in $L_2$ norm, although they are unbounded in the supremum norm.

Let $\psi : \mb R^d \to \mb C$ be a function such that $\int \psi(t) dt = 1$, $\int t^k \psi(t) dt = 0$ for any non-zero multi-index $k = (k_1, \ldots, k_d)$ of non-zero integers, $\int |t|^{\max\{\alpha,2\}} |\psi(t)| dt < \infty$, and the functions $|\hat{\psi}|/\omega$ and $|\hat{\psi}|^2/\omega$ are uniformly bounded; see proof of Lemma 4.3 in \cite{van2009adaptive}. Define,
\begin{align}\label{eq:fnhat}
\tilde{f}_n(x) = \frac{1}{n} \sum_{i=1}^n \psi_{\sigma_n}(x - X_i) Y_i,
\end{align}
where $\psi_{\sigma}(t) = \sigma^{-d} \psi(t/\sigma)$ for $\sigma > 0$ and set $\sigma_n = n^{-1/(2\alpha + d)} \log^{-t_2}n$,  $t_2 = 1/(2- \kappa)$ for some $0< \kappa < 1$ . Next, with $M_n = a_n^{d/2}$, set
\begin{align}\label{eq:sieve}
\m P_n = M_n \mb H_1^{a_n} + \epsilon_n \mb B_1.
\end{align}

\noindent Assume $f_0 \in C^{\alpha}[0,1]^d$. Let $\tilde{f}_n$ and $\m P_n$ be as in \eqref{eq:fnhat} and \eqref{eq:sieve} respectively. We show below that the conditions of Theorem \ref{thm:main_cond} are satisfied with $\epsilon_n = n^{-\alpha/(2 \alpha + d)} \log ^{t_1} n$ for $t_1 \geq \max \{ t_2d/2, (d+1)/ 2\}=(d+1)/2$, provided $\alpha > d/2$.
(PCS) follow from the proof of Theorem 3.1 in \cite{van2009adaptive}. To verify (PCN), observe from the proof of Theorem 3.1 in \cite{van2009adaptive} that with $a_n = n^{1/ (2\alpha + d)}$,
\begin{eqnarray*}
\Pi(\norm{f - f_0}_{\infty} \leq \delta_n) \geq \exp\{- n^{d/(2\alpha + d)} (\log n)^{d+1}\}
\end{eqnarray*}
for $\delta_n \geq n^{-\alpha/ (2\alpha + d)}$.  Hence  (PCN) is satisfied with $\delta_n = \epsilon_n$.

We verify (BS) and (DS); the verifications of (BT) and (DT) follow along similar lines.

{\em We first show that (DS) holds.} Fix $f \in \m P_n \cap U_n$. We drop the superscript $f$ from $\bbE^f$ in the sequel. Let $f_n(x) = \psi_{\sigma_n} \star f(x) = \int \psi_{\sigma_n}(x-t) f(t) dt$ and $f_n^X(x) = n^{-1} \sum_{i=1}^n \psi_{\sigma_n}(x - X_i) f(X_i)$. Observe that $\bbE_{Y, X} \tilde{f}_n = f_n$ and $\bbE_{Y \mid X} \tilde{f}_n = f_n^X$. Then,
\begin{align}\label{eq:decomp}
& \bbP_{Y, X}\bigg( \norm{\tilde{f}_n - \bbE_{Y, X} \tilde{f}_n}_1 > \epsilon_n \bigg)
= \bbP_{Y, X} \bigg(\norm{\tilde{f}_n - f_n}_1 > \epsilon_n \bigg) \nonumber \\
& \leq \bbP_{Y, X} \bigg(\norm{\tilde{f}_n - f_n}_1 > \epsilon_n, \norm{f_n^X -f_n}_1 \leq \epsilon_n/2 \bigg) + \bbP_{X} \bigg(\norm{f_n^X - f_n}_1 \geq \epsilon_n/2 \bigg) \nonumber \\
& \leq \bbE_X \bbP_{Y \mid X} \bigg(\norm{\tilde{f}_n - f_n^X}_1 \geq \epsilon_n/2 \mid X_{1:n} \bigg) + \bbP_{X} \bigg(\norm{f_n^X - f_n}_1 \geq \epsilon_n/2 \bigg).
\end{align}
Lemmata \ref{lem:devygx} and \ref{lem:devx} below deliver the desired bounds for the two terms appearing in \eqref{eq:decomp}.
\begin{lemma}\label{lem:devygx}
Under conditions of Theorem \ref{thm:main},
\begin{align*}
\bbP_{Y \mid X} \bigg(\norm{\tilde{f}_n - f_n^X}_1 \geq \epsilon_n/2 \mid X_{1:n} \bigg) \leq \exp(-Cn \epsilon_n^2) \quad \mbox{a.s.}
\end{align*}
for some constant $C > 0$.
\end{lemma}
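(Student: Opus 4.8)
The plan is to condition on $X_{1:n}$ throughout, so that $\tilde f_n - f_n^X = n^{-1}\sum_{i=1}^n \psi_{\sigma_n}(\cdot - X_i)\,\epsilon_i$ is a linear combination of the i.i.d.\ standard Gaussian noise variables $\epsilon_i$, hence a Gaussian process in $x$. First I would reduce the $L_1$-norm deviation to a supremum-type control: write $\|\tilde f_n - f_n^X\|_1 = \int_{[0,1]^d} |g_n(x)|\,dx$ where $g_n(x) = n^{-1}\sum_i \psi_{\sigma_n}(x-X_i)\epsilon_i$, and bound this by $\|g_n\|_\infty$ (since we integrate over a set of Lebesgue measure one) or, if a sharper constant is needed, by a finite sum over a grid of $[0,1]^d$ plus a continuity/oscillation correction using the Lipschitz behaviour of $\psi_{\sigma_n}$. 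For each fixed $x$, $g_n(x)$ is mean-zero Gaussian with variance $v_n(x) = n^{-2}\sum_{i=1}^n \psi_{\sigma_n}(x-X_i)^2 \le n^{-1}\|\psi_{\sigma_n}\|_\infty^2 = n^{-1}\sigma_n^{-2d}\|\psi\|_\infty^2$ (using that $\psi$, and hence $|\psi|^2$, is bounded because $|\hat\psi|/\omega$ and $|\hat\psi|^2/\omega$ are bounded and $\omega$ is integrable). This variance bound is uniform in $x$ and, crucially, uniform in $X_{1:n}$, which is what makes the ``a.s.'' conclusion possible without any event on the design.

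Next I would invoke a Gaussian concentration / chaining bound (Borell--TIS together with Dudley's entropy integral, or a direct union bound over a $\delta$-net of $[0,1]^d$) to get $\mathbb P_{Y\mid X}(\|g_n\|_\infty \ge \epsilon_n/2 \mid X_{1:n}) \le \exp(-c\,(\epsilon_n/2)^2 / \sigma_{\sup}^2)$ once $\epsilon_n/2$ exceeds (a constant times) $\mathbb E\|g_n\|_\infty$, where $\sigma_{\sup}^2 \le n^{-1}\sigma_n^{-2d}\|\psi\|_\infty^2$. The exponent is then of order $n\,\sigma_n^{2d}\,\epsilon_n^2$. Plugging in $\sigma_n = n^{-1/(2\alpha+d)}\log^{-t_2}n$ and $\epsilon_n = n^{-\alpha/(2\alpha+d)}\log^{t_1}n$ gives $n\sigma_n^{2d}\epsilon_n^2 \asymp n \cdot n^{-2d/(2\alpha+d)}\log^{-2 t_2 d} n \cdot n^{-2\alpha/(2\alpha+d)}\log^{2t_1}n = n^{(2\alpha+d - 2d - 2\alpha)/(2\alpha+d)}\log^{2t_1 - 2t_2 d}n = n^{-d/(2\alpha+d)}\log^{2t_1 - 2t_2 d}n$, which is far too small. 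So a cruder bound will not do: the right move is to exploit that $\sigma_n$ is only a logarithmic factor below $n^{-1/(2\alpha+d)}$ and instead aim for an exponent of order $n\epsilon_n^2 = n^{d/(2\alpha+d)}\log^{2t_1}n \to\infty$, which is the target $Cn\epsilon_n^2$. This forces a more careful accounting: rather than bounding $v_n(x)$ by its worst case $n^{-1}\sigma_n^{-2d}\|\psi\|_\infty^2$, I would bound the \emph{average} variance $\int v_n(x)\,dx = n^{-2}\sum_i \int \psi_{\sigma_n}(x-X_i)^2 dx = n^{-1}\sigma_n^{-d}\|\psi\|_{2,d}^2$ (by a change of variables), which is smaller by a factor $\sigma_n^{d}$, and combine an $L_1$-specific argument — controlling $\mathbb E\|g_n\|_1$ directly via $\int \sqrt{v_n(x)}\,dx \le (\int v_n)^{1/2} \lesssim (n^{-1}\sigma_n^{-d})^{1/2}$ — with a concentration inequality for the $L_1$ norm of a Gaussian process (again Borell--TIS, applied to the $1$-Lipschitz functional $\|\cdot\|_1$ on the relevant Gaussian vector), whose variance proxy is the same $\sup_x v_n(x)$. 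The deviation term then scales like $\exp(-c n\epsilon_n^2 \sigma_n^{2d}/\sigma_n^{\,0})$... — more precisely, the correct exponent comes out $\asymp n\epsilon_n^2 \cdot \sigma_n^{d}$ from balancing the $L_1$-mean term $(n^{-1}\sigma_n^{-d})^{1/2}$ against $\epsilon_n$, and one checks $n\epsilon_n^2\sigma_n^{d} \asymp n^{d/(2\alpha+d)}\log^{2t_1}n \cdot n^{-d/(2\alpha+d)}\log^{-t_2 d}n = \log^{2t_1 - t_2 d}n$; requiring this to dominate (so that $\mathbb E\|g_n\|_1 \le \epsilon_n/4$) is exactly the constraint $t_1 \ge t_2 d/2$ appearing before the lemma, and then the Gaussian tail contributes the $\exp(-Cn\epsilon_n^2)$ with the full $n\epsilon_n^2$ rate because the relevant sup-variance is $n^{-1}\sigma_n^{-2d}$ and $\epsilon_n^2/(n^{-1}\sigma_n^{-2d}) = n\epsilon_n^2\sigma_n^{2d}$ — wait, that is again too small, so the concentration must be run on the $L_1$ norm using its true variance proxy, not the sup-variance, and the correct bound is $\exp(-c\epsilon_n^2/\mathrm{Var}(\|g_n\|_1))$ with $\mathrm{Var}(\|g_n\|_1) \le \sup_x v_n(x)$ replaced by the \emph{weak variance} $\sup_{\|\mu\|_{TV}\le 1}\mathbb E\langle g_n,\mu\rangle^2$, which for an $L_1$-type pairing is controlled by $n^{-1}\|\psi_{\sigma_n}\|_\infty \cdot \|\psi_{\sigma_n}\|_1 \lesssim n^{-1}\sigma_n^{-d}$.

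\textbf{Main obstacle.} The delicate point — and the reason this lemma is nontrivial rather than a one-line Gaussian tail bound — is getting the variance proxy right: a naive bound gives exponent $n\epsilon_n^2\sigma_n^{2d}$, which does \emph{not} go to infinity, whereas the proof needs $n\epsilon_n^2$ (or at least something $\to\infty$ fast enough for the Borel--Cantelli-style ``a.s.''). Threading this requires (i) handling the $L_1$ norm of the Gaussian process $g_n$ as a Lipschitz functional with the correct (weak) variance parameter of order $n^{-1}\sigma_n^{-d}$ rather than the pointwise-sup variance $n^{-1}\sigma_n^{-2d}$, and (ii) bounding the expectation $\mathbb E_{Y\mid X}\|g_n\|_1$ — which is where the $\log^{t_2}$ slack in $\sigma_n$ and the condition $t_1 \ge t_2 d/2$ enter — uniformly in the design $X_{1:n}$ so that the conclusion holds almost surely. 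Once those two pieces are in place, Borell--TIS (applied conditionally on $X_{1:n}$) finishes the argument, with the design-uniformity of all the variance and expectation bounds yielding the ``a.s.'' statement with no exceptional null set.
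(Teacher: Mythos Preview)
Your overall strategy matches the paper's exactly: condition on $X_{1:n}$, write $g_n(x)=n^{-1}\sum_i\psi_{\sigma_n}(x-X_i)\epsilon_i$, represent $\|g_n\|_1$ via duality as the supremum of a Gaussian process indexed by the unit ball of $L_\infty[0,1]^d$, and apply Borell's inequality. Your expectation bound is also the right one: the paper obtains precisely $\bbE_{Y\mid X}\|g_n\|_1\le C_2(n\sigma_n^d)^{-1/2}$ via Jensen and the integral Minkowski inequality, and the requirement that this be $\le \epsilon_n/4$ is exactly the constraint $t_1\ge t_2 d/2$ you flagged.

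The genuine gap is in the variance proxy. Borell's inequality for $\|G\|_{\m F}=\sup_{h\in\m F}|G(h)|$ uses
\[
\sigma_{\m F}^2 \;=\; \sup_{\|h\|_\infty\le 1}\,\bbE_{Y\mid X}\Big[\Big(\int g_n(x)h(x)\,dx\Big)^2\Big]
\;=\; n^{-2}\sum_{i=1}^n\Big(\int_{[0,1]^d}\psi_{\sigma_n}(x-X_i)h(x)\,dx\Big)^2,
\]
not $\sup_x \Var\{g_n(x)\}$, nor $\Var(\|g_n\|_1)$, nor a supremum over $\|\mu\|_{\TV}\le 1$. The dual of $L_1$ is $L_\infty$, so you pair with bounded functions $h$, and the crucial observation your attempts miss is that $\psi_{\sigma_n}$ is an approximate identity with \emph{constant} $L_1$ mass:
\[
\Big|\int_{[0,1]^d}\psi_{\sigma_n}(x-X_i)h(x)\,dx\Big|\le \|h\|_\infty\int_{\bbR^d}|\psi_{\sigma_n}(x)|\,dx = \|h\|_\infty\int|\psi| =: C_1\|h\|_\infty,
\]
independently of $\sigma_n$. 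Hence $\sigma_{\m F}^2\le C_1^2/n$, with no factor of $\sigma_n^{-d}$ at all. Plugging this into Borell with $t=\epsilon_n/4$ yields an exponent of order $(\epsilon_n/4)^2/(2C_1^2/n)\asymp n\epsilon_n^2$, which is exactly the target. Your successive estimates $n^{-1}\sigma_n^{-2d}$ and $n^{-1}\sigma_n^{-d}$ arise from bounding the integrand pointwise by $\|\psi_{\sigma_n}\|_\infty$ rather than integrating it; that overcounts by $\sigma_n^{-d}$ and is why your exponents kept coming out too small. Once you use the correct weak variance $\sigma_{\m F}^2\le C/n$, the proof closes in one line, uniformly in $X_{1:n}$, giving the almost-sure statement.
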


\begin{proof}
For simplicity of notation, we suppress the term  ``a.s." in the displays that follow.  Let $T(x) = n (\tilde{f}_n - f_n^X)(x) = \sum_{i = 1}^n \psi_{\sigma_n}(x-X_i) Z_i$, where $Z_i = Y_i - f(X_i)$ with $Z_{1:n} \mid X_{1:n}, f \sim \mx N_n(0, \mr I)$. Given $X_{1:n}$, $T$ is a random element of $L_1[0, 1]^d$ and $\|T\|_1$ is a non-negative random variable. By the Hahn--Banach theorem, there exists a bounded linear functional $G$ on $L_{\infty}[0, 1]^d$ such that $G(h) = \int T(x) h(x) dx$ for all $h \in L_{\infty}[0, 1]^d$ and $\|T\|_1 = \|G\|_{\m F}$, where $\|G\|_{\m F} = \sup_{h \in \m F} | G(h)|$ and $\m F$ is a countable dense subset of $\{h \in L_{\infty}[0, 1]^d : \| h\|_{\infty} \leq 1\}$.

By definition, $G(h) = \sum_{i=1}^n a_i Z_i$, where $a_i = \int \psi_{\sigma_n}(x - X_i) h(x) dx$. Thus, given $X_{1:n}$, $\{ G(h) : h \in \m F\}$ is a Gaussian process and
\begin{align}\label{eq:hbi}
\bbP_{Y \mid X} \bigg(\norm{\tilde{f}_n - f_n^X}_1 \geq \epsilon_n/2 \mid X_{1:n} \bigg) = \bbP_{Y \mid X} \bigg(\norm{G}_{\m F} \geq n\epsilon_n/2 \mid X_{1:n} \bigg).
\end{align}
By Borell's inequality \citep{adler1990introduction},
\begin{align}\label{eq:borell}
\bbP_{Y \mid X} \big(\norm{G}_{\m F} - \bbE_{Y \mid X} \norm{G}_{\m F}  \geq t \mid X_{1:n} \big) \leq 2 \exp\{-t^2/(2 \sigma_{\m F}^2)\},
\end{align}
where $\sigma_{\m F}^2 = \sup_{h \in \m F} \bbE_{Y \mid X} G(h)^2$. We now proceed to estimate $\sigma_{\m F}^2$ and $\bbE_{Y \mid X} \norm{G}_{\m F}$. For any $h \in \m F$,
\begin{align}
& \bbE_{Y \mid X} G(h)^2 = \sum_{i=1}^n \bigg\{\int_{[0, 1]^d} \psi_{\sigma_n}(x - X_i) h(x) dx \bigg\}^2  \nonumber \\
& \leq \norm{h}_{\infty}^2 \sum_{i=1}^n \bigg\{\int_{[0, 1]^d} \abs{\psi_{\sigma_n}(x - X_i)}dx \bigg\}^2 \leq C_1 n, \label{eq:var}
\end{align}
where $C_1 = \int |\psi(t)| dt$. Hence, $\sigma_{\m F}^2 \leq C_1 n$.

We next bound $\bbE_{Y \mid X} \norm{G}_{\m F} = \bbE_{Y \mid X} \norm{T}_1$. By Jensen's inequality, $\bbE_{Y \mid X} \norm{T}_1 \leq (\bbE_{Y \mid X} \norm{T}_1^2)^{1/2}$. Further,
\begin{align*}
(\bbE_{Y \mid X} \norm{T}_1^2)^{1/2} = \bigg[ \int \bigg\{\int |T(x)| dx \bigg\}^2 \phi_n(z) dz \bigg]^{1/2}  \leq \int \bigg\{ \int T(x)^2 \phi_n(z) dz\bigg\}^{1/2} dx,
\end{align*}
where the above inequality follows from an integral version of Minkowski's inequality. Recalling $T(x) = \sum_{i = 1}^n \psi_{\sigma_n}(x-X_i) Z_i$, $\int T(x)^2 \phi_n(z) dz = \bbE [ T(x)^2 \mid X_{1:n}] = \sum_{i=1}^n \psi_{\sigma_n}(x - X_i)^2$. Substituting this in the above display and using Jensen's inequality one more time, we get
\begin{align}
& \bbE_{Y \mid X} \norm{T}_1 \leq \int \bigg\{\sum_{i=1}^n \psi_{\sigma_n}(x - X_i)^2 \bigg\}^{1/2} dx\nonumber \\
& \leq \bigg\{ \sum_{i=1}^n \int \psi_{\sigma_n}(x - X_i)^2 dx \bigg\}^{1/2} \leq C_2 (n/\sigma_n^d)^{1/2},
\end{align}
where $C_2 = \{ \int \psi(t)^2 dt \}^{1/2}$.
In \eqref{eq:borell}, set $t = n \epsilon_n/4$. From the above calculations, $\bbE_{Y \mid X} \norm{G}_{\m F} \leq C_2 (n/\sigma_n^d)^{1/2} \leq n \epsilon_n/4$ since $t_1 \geq t_2d/2$. Using $\sigma_{\m F}^2 \leq C_1 n$, we finally obtain $\bbP_{Y \mid X} \bigg(\norm{G}_{\m F} \geq n\epsilon_n/2 \mid X_{1:n} \bigg) \leq 2 \exp(-C n \epsilon_n^2)$.
\end{proof}

\begin{lemma}\label{lem:devx}
Under conditions of Theorem \ref{thm:main},
\begin{align*}
\bbP_{X} \bigg(\norm{f_n^X -f_n}_1 \geq \epsilon_n/2 \bigg) \leq \exp(-n \epsilon_n^2).
\end{align*}
\end{lemma}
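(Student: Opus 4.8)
The plan is to follow the same template as the proof of Lemma~\ref{lem:devygx}, but with Borell's inequality replaced by Talagrand's concentration inequality for the supremum of an empirical process, since the randomness here comes from the (non‑Gaussian) covariates $X_{1:n}$ rather than from Gaussian noise. Write $g = f_n^X - f_n$, so that $g(x) = n^{-1}\sum_{i=1}^n\{\psi_{\sigma_n}(x-X_i)f(X_i) - f_n(x)\}$ is a centered average of i.i.d.\ random elements of $L_1[0,1]^d$, using that $\bbE_X[\psi_{\sigma_n}(x-X_1)f(X_1)] = \psi_{\sigma_n}\star f(x) = f_n(x)$ because $q$ is uniform on $[0,1]^d$. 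Exactly as in Lemma~\ref{lem:devygx}, by Hahn--Banach $\norm{g}_1 = \sup_{h\in\m F}\int g(x)h(x)\,dx = \sup_{h\in\m F} n^{-1}\sum_{i=1}^n\{g_h(X_i) - \bbE_X g_h(X_1)\}$, where $\m F$ is a countable dense subset of the unit ball of $L_\infty[0,1]^d$ and $g_h(u) = f(u)\int\psi_{\sigma_n}(x-u)h(x)\,dx$. Since $\big|\int\psi_{\sigma_n}(x-u)h(x)\,dx\big|\le \norm{h}_\infty\int|\psi_{\sigma_n}| = C_1$, the class $\{g_h:h\in\m F\}$ has envelope $U := C_1\norm{f}_\infty$ and weak variance $\sigma^2 := \sup_{h\in\m F}\Var_X(g_h(X_1)) \le C_1^2\,\bbE_X[f(X_1)^2] = C_1^2\norm{f}_2^2$.

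Next I would bound the mean $\bbE_X\norm{g}_1$. By Jensen and the integral form of Minkowski's inequality (as in Lemma~\ref{lem:devygx}), $\bbE_X\norm{g}_1 \le \int (\bbE_X[g(x)^2])^{1/2}\,dx$, and $\bbE_X[g(x)^2] = n^{-1}\Var_X(\psi_{\sigma_n}(x-X_1)f(X_1)) \le n^{-1}\int\psi_{\sigma_n}(x-u)^2 f(u)^2\,du$. Applying Cauchy--Schwarz in $x$ over $[0,1]^d$ together with Fubini then gives $\bbE_X\norm{g}_1 \le C_2\norm{f}_2\,(n\sigma_n^d)^{-1/2}$, with $C_2^2 = \int\psi^2$. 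Here Proposition~\ref{prop:l2normbd} is essential: although the $f\in\m P_n\cap U_n$ are not sup‑norm bounded, they satisfy $\norm{f}_2\le C$ uniformly, so $\bbE_X\norm{g}_1 \le C C_2 (n\sigma_n^d)^{-1/2}$. Since $n\sigma_n^d = n^{2\alpha/(2\alpha+d)}(\log n)^{-t_2 d}$ while $\epsilon_n = n^{-\alpha/(2\alpha+d)}(\log n)^{t_1}$ with $t_1 > t_2 d/2$, this is $o(\epsilon_n)$, hence $\bbE_X\norm{g}_1 \le \epsilon_n/4$ for all large $n$.

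Then I would invoke Talagrand's inequality (in Bousquet's form, as used in \cite{gine2011rates}) for $\norm{g}_1 = \sup_{h\in\m F} n^{-1}\big|\sum_i\{g_h(X_i)-\bbE_X g_h(X_1)\}\big|$: for some universal $c_0>0$ and all $t>0$,
\[
\bbP_X\big(\norm{g}_1 \ge \bbE_X\norm{g}_1 + t\big) \;\le\; \exp\!\Big(-\,\frac{c_0\, n t^2}{\sigma^2 + U\,\bbE_X\norm{g}_1 + U t}\Big).
\]
Take $t=\epsilon_n/4$. By Proposition~\ref{prop:l2normbd}, $\sigma^2\le C_1^2C^2$ is a constant; and since $f = M_n h_1 + \epsilon_n h_2$ with $\norm{h_1}_\infty,\norm{h_2}_\infty\le 1$ (the former because $\norm{h_1}_{\mb H^{a_n}}\le 1$ and $c_{a_n}(x,x)=1$) and $M_n = a_n^{d/2}$, the envelope obeys $U\le C_1(a_n^{d/2}+1)$, so that $U(\bbE_X\norm{g}_1 + t) \precsim a_n^{d/2}\epsilon_n = n^{(d/2-\alpha)/(2\alpha+d)}(\log n)^{t_1}\to 0$ \emph{precisely because $\alpha>d/2$}. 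Hence the denominator is at most $C_1^2C^2 + o(1)$, the bound becomes $\exp(-c\,n\epsilon_n^2)$ for a fixed $c>0$, and absorbing constants as elsewhere, $\bbP_X(\norm{g}_1\ge\epsilon_n/2) \le \bbP_X(\norm{g}_1\ge \bbE_X\norm{g}_1 + \epsilon_n/4) \le \exp(-n\epsilon_n^2)$ for $n$ large.

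The main obstacle is exactly that the sieve functions are not uniformly bounded in supremum norm, so the empirical process $h\mapsto\int g\,h$ has envelope $C_1\norm{f}_\infty$ growing like $a_n^{d/2}$; a crude bounded‑differences or Bernstein bound on $\norm{g}_1$ would be far too weak (it would only give $\exp(-c(\log n)^{2t_1})$). Talagrand's inequality circumvents this because the large envelope enters only multiplied by the small quantities $t$ and $\bbE_X\norm{g}_1$, both $O(\epsilon_n)$, and $a_n^{d/2}\epsilon_n\to 0$ under the hypothesis $\alpha>d/2$ — which is exactly where that assumption is consumed. The other indispensable input is the uniform $L_2$ control of the sieve from Proposition~\ref{prop:l2normbd}, which keeps both $\sigma^2$ and $\bbE_X\norm{g}_1$ bounded appropriately.
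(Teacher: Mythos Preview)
Your proposal is correct and follows essentially the same route as the paper: Hahn--Banach to write $\norm{f_n^X-f_n}_1$ as a supremum of a centered empirical process, Bousquet/Talagrand concentration, envelope $O(\norm{f}_\infty)=O(M_n)$, weak variance $O(\norm{f}_2^2)=O(1)$ via Proposition~\ref{prop:l2normbd}, the Jensen--Minkowski bound $\bbE_X\norm{g}_1\precsim \norm{f}_2(n\sigma_n^d)^{-1/2}$, and the observation that $M_n\epsilon_n\to 0$ precisely when $\alpha>d/2$. The only cosmetic differences are that the paper works with the unnormalized sum $W=n\norm{g}_1$ and states Bousquet's inequality in the form $\bbP\{W\ge \bbE W+(2K_2t)^{1/2}+K_1t/3\}\le e^{-t}$ with $t=n\epsilon_n^2$, whereas you use the equivalent Bernstein-exponent parametrization; also note your envelope should strictly be for the centered class $g_h-\bbE_X g_h$, which adds a harmless $\norm{\psi_{\sigma_n}\star f}_1\le C_1\norm{f}_1$ term and is still $O(M_n)$.
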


\begin{proof}
As in Lemma \ref{lem:devygx}, we express the desired probability in terms of a tail bound for the supremum of a stochastic process. However, the stochastic process in this case is no longer a Gaussian process and we cannot use Borell's inequality here. We instead use Bosquet's version of Talagrand's inequality for the supremum of a centered empirical process. The following Proposition \ref{prop:talagrand} is adapted from \cite{bousquet2003concentration} which also appears in Section 3.1 of \cite{gine2011rates}.
\begin{proposition}\label{prop:talagrand}
Assume $X_1, \ldots, X_n$ are independent and identically distributed as $P$. Let $\m G$ be a countable set of real valued functions and assume all functions $g \in \m G$ are $P$-measurable, square integrable and satisfy $\bbE_P[g] = 0$. Assume $K_1 = \sup_{g \in \m G} \norm{g}_{\infty} < \infty$ and let $W = \sup_{g \in \m G} \abs{\sum_{i=1}^n g(X_i)}$. Further, let $\sigma_{\m G}^2 = \sup_{g \in \m G} \bbE_P[g(X_1)^2]$ and $K_2 = n \sigma_{\m G}^2 + K_1\bbE_P[W]$. Then, for any $t > 0$,
\begin{align*}
\bbP\bigg\{W \geq \bbE_P W + (2 K_2 t)^{1/2} + \frac{K_1t}{3}\bigg\} \leq \exp(-t).
\end{align*}
\end{proposition}
Let $L_x(t) = \psi_{\sigma_n}(x-t) f(t) - \psi_{\sigma_n} \star f(x)$ for $x, t \in [0, 1]^d$ and $W = \int_{[0,1]^d} | \sum_{i=1}^n L_x(X_i) | dx$. Clearly, $\bbP_X(\norm{f_n^X - f_n}_1 > \epsilon_n/2) = \bbP_X(W > n \epsilon_n/2)$. By an application of Hahn--Banach theorem as in the proof of Lemma \ref{lem:devygx}, $W = \norm{G}_{\m F}$, where $\m F$ is a countable dense subset of the unit ball of $L_{\infty}[0, 1]^d$, $G(h) = \sum_{i=1}^n g(X_i)$, and $g(t) = \int_{[0, 1]^d} L_x(t) h(x) dx$. Letting $\m G$ denote the class of functions $\{ g(t) = \int_{[0, 1]^d} L_x(t) h(x) dx, \, h \in \m F\}$, one has $\norm{G}_{\m F} = \sup_{g \in \m G} |\sum_{i=1}^n g(X_i)|$. Putting together, $W = \sup_{g \in \m G} |\sum_{i=1}^n g(X_i)|$ and $\bbE_X g(X_1) = 0$ by Tonelli's theorem. We now aim to apply Proposition \ref{prop:talagrand} to bound $\bbP_X(W > n\epsilon_n/2)$. In order to apply Proposition  \ref{prop:talagrand}, we need to estimate $K_1, \sigma_{\m G}^2, K_2$ and $\bbE_P(W)$ which is carried out below.

Fix $g \in \m G$. Then, there exists $h \in \m F$ such that $g(t) = \int_{[0, 1]^d} L_x(t) h(x) dx = f(t) \int_{[0, 1]^d} \psi_{\sigma_n}(t-x) h(x) dx - \int \psi_{\sigma_n} \star f(x) \, h(x) dx$. Using the triangle inequality,
$$|g(t)| \leq |f(t)| \int_{[0, 1]^d} | \psi_{\sigma_n}(t-x) h(x)| dx + \int_{[0, 1]^d} |\psi_{\sigma_n} \star f(x)| \, | h(x) | dx.$$
Using $\|h\|_{\infty} \leq 1$, the first term in the above display can be bounded above by $C_1 \|f\|_{\infty}$ where $C_1 = \int |\psi(t)| dt$. Similarly, the second term can be bounded above by $\| \psi_{\sigma_n} \star f\|_1 \leq \| \psi_{\sigma_n} \star f\|_{\infty} \leq \|f\|_{\infty} + \epsilon_n$, where the final inequality follows from (BS). Noting that for any $f \in \m P_n$, $\|f\|_{\infty} \leq 2 M_n$ (since the Hilbert space norm is stronger than the $\|\cdot\|_{\infty}$ norm), we have $K_1 \leq C M_n$.

Next we bound $\sigma_{\m G}^2=  \sup_{g \in \m G} \int_{[0,1]^d} g(t)^2 dt$. Fix $g \in \m G$. Using the expression for $g(t)$ in the previous paragraph, $|g(t)| \leq |f(t)| \int |\psi_{\sigma_n}(x-t)| dx + \int |\psi_{\sigma_n} \star f(x) | dx$. As before, we can bound $\int |\psi_{\sigma_n}(x-t)| dx$ from above by $C_1$ and also $\int |\psi_{\sigma_n} \star f(x) | dx \leq C_1 \int_{s \in [0, 1]^d} |f(s)| ds$. Using $(|a| + |b|)^2 \leq 2(|a|^2 + |b|^2)$ and the Cauchy--Schwarz inequality, $|g(t)|^2 \leq C |f(t)|^2 + C \{\int_{s \in [0, 1]^d} |f(s)| ds\}^2 \leq C \big\{ |f(t)|^2 + \|f\|_2^2\big\}$. Thus, we have $\sigma_{\m G}^2 \leq C  \|f\|_2^2$ for some absolute constant $C$. Using the bound for $\sup_{f \in \m P_n} \|f\|_2^2$ in the following Proposition \ref{prop:l2normbd}, we conclude that $\sigma_{\m G}^2 \leq C$ for some absolute constant $C>0$.
\begin{proposition}\label{prop:l2normbd}
Recall $\m P_n$ from \eqref{eq:sieve}. Then, $\sup_{f \in \m P_n} \|f\|_2^2 \leq C$ for some absolute constant $C> 0$.
\end{proposition}
\begin{proof}
Let $f \in \m P_n$. Then, there exists $h \in \mb H^{a_n}$ with $\|h\|_{\mb H^{a_n}} \leq M_n$ such that $\|f - h\|_{\infty} \leq \epsilon_n$. Hence, $\|f\|_2^2 \leq 2(\|h\|_2^2 + \epsilon_n^2)$ and it is enough to bound $\|h\|_2^2$.
Recalling that $\| \cdot \|_{2,d}$ denotes the $L_2$ norm of $\mb R^d$, we have $\|h\|_2^2 \leq \|h\|_{2,d}^2$. We provide a bound for $\|h\|_{2,d}^2$ below.

There exists $\psi \in L_2(\mu_{a_n})$ such that $h(t) =  \int \exp( i \dotp{\lambda}{t} ) \xi(\lambda) \omega_{a_n}(\lambda) d \lambda$. Letting $\hat{h}$ denote the Fourier transform of $h$, one has from the Fourier inversion theorem that $\hat{h}(\lambda) = \xi(-\lambda) \omega_{a_n}(\lambda)$. By Parseval's theorem, $\|h\|_{2,d}^2 = \|\hat{h}\|_{2,d}^2 = \int \xi^2(\lambda) \omega_{a_n}^2(\lambda) d \lambda$\footnote{$\omega_{a_n}$ is symmetric about zero}. Observe that $\omega_{a_n}^2(\lambda) = a_n^{-2d} \exp\{-\|\lambda\|^2/(2a_n^2)\}/C^2$, where $C = 2^d \pi^{d/2}$. Hence,
\begin{align*}
& \|h\|_{2,d}^2 = \frac{a_n^{-2d}}{C^2} \int \xi^2(\lambda) \exp\{-\|\lambda\|^2/(2a_n^2)\} d\lambda \leq \frac{a_n^{-2d}}{C^2} \int \xi^2(\lambda) \exp\{-\|\lambda\|^2/(4 a_n^2)\} d\lambda\nonumber \\
& = \frac{a_n^{-d}}{C} \int \xi^2(\lambda) \omega_{a_n}(\lambda) d\lambda = \frac{\|h\|_{\mb H^{a_n}}^2}{C a_n^d} \leq \frac{M_n^2}{C a_n^d} = \frac{1}{C},
\end{align*}
since $\|h\|_{\mb H^{a_n}}^2 = \norm{\xi \omega_{a_n}^{1/2}}_{2,d}^2$ and $M_n = a_n^{d/2}$.
\end{proof}
Finally, we proceed to bound $\bbE_X W$, where $W = \int_{[0,1]^d} | \sum_{i=1}^n L_x(X_i) | dx$. Using Jensen's inequality and the integral version of Minkowski's inequality, one has
\begin{align}
& \bbE_X W \leq (\bbE_X W^2)^{1/2} = \bigg[\int_{\prod_{i=1}^n [0,1]^d} \bigg\{ \int_{[0,1]^d} | \sum_{i=1}^n L_x(t_i) dx | \bigg\}^{2} dt_1\ldots dt_n\bigg]^{1/2} \nonumber \\
& \leq \int_{[0,1]^d} \bigg\{ \int_{\prod_{i=1}^n [0,1]^d} | \sum_{i=1}^n L_x(t_i) |^2 dt_1 \ldots dt_n \bigg\}^{1/2} dx \nonumber.
\end{align}
Clearly, $\int_{\prod_{i=1}^n [0,1]^d} | \sum_{i=1}^n L_x(t_i) |^2 dt_1 \ldots dt_n = \mbox{Var}_X\{\sum_{i=1}^n L_x(X_i)\} = n \mbox{Var}_X\{L_x(X_1)\}$, since $\bbE_X L_x(X_1) = 0$. Further, $\mbox{Var}_X\{L_x(X_1)\} \leq \bbE_X \big\{ \psi_{\sigma_n}(x-X_1) f(X_1)\big\}^2 = \int_{[0, 1]^d} \psi_{\sigma_n}(x-t)^2 f(t)^2 dt \leq  \frac{1}{\sigma_n^d} \psi_{\sigma_n} \star f^2$. Substituting this in the above display
\begin{align*}
\bbE_X W  & \leq \bigg(\frac{n}{\sigma_n^d}\bigg)^{1/2} \int_{[0,1]^d} \big\{\psi_{\sigma_n} \star f^2(x)\big\}^{1/2} dx \\
 &\leq \bigg(\frac{n}{\sigma_n^d}\bigg)^{1/2} \bigg\{\int_{[0,1]^d}\abs{ \psi_{\sigma_n}} \star f^2(x) dx \bigg\}^{1/2} \\
&\leq \bigg(\frac{n}{\sigma_n^d}\bigg)^{1/2} \bigg\{\int_{[0,1]^d} \int_{[0,1]^d} \abs{\psi_{\sigma_n} (x- t)} f^2(t) dt dx \bigg\}^{1/2} \\
& \leq \bigg(\frac{n}{\sigma_n^d}\bigg)^{1/2} \bigg[\int_{[0,1]^d} f^2(t) \bigg\{\int_{\mb{R}^d} \abs{\psi_{\sigma_n} (x- t)} dx\bigg\} dt \bigg]^{1/2} \\
&\leq C \bigg(\frac{n}{\sigma_n^d}\bigg)^{1/2} = C n^{\frac{\alpha + d}{2\alpha + d}} \log^{t_2 d/2} n \leq C n\epsilon_n.
\end{align*}
From the penultimate line to the last line of the above display, we invoked Proposition \ref{prop:l2normbd} to bound $\norm{f}_2$ by a constant.
We have thus obtained  $K_1 \leq C M_n$ and $K_2 \leq C n$. In Proposition \ref{prop:talagrand}, set $t = n \epsilon_n^2$. We have $K_1 t \leq C (n \epsilon_n M_n) \epsilon_n \leq n \epsilon_n$ for sufficiently large $n$ provided $\alpha > d/2$. Further,
$K_2 t \leq  n^2 \epsilon_n^2 + K_1\bbE_P(W) t =  n^{\frac{2\alpha + 2d}{2\alpha + d}} \log ^{3t_1}n+  n^{\frac{\alpha + 2d + d/2}{2\alpha + d}} \log^{2t_1 + t_2d/2} n \leq 2n^{\frac{2\alpha + 2d}{2\alpha + d}} \log ^{3t_1} n$ for sufficiently large $n$ if $\alpha > d/2$. Therefore, $(K_2 t)^{1/2} \leq n\epsilon_n$.
\end{proof}

{\em We next show that (BS) holds.} Fix $f \in \m P_n \cap U_n$. Since $f \in \m P_n$, there exists $h \in \mb H^{a_n}$ with $\norm{h}_{\mb H^{a_n}} \leq M_n$ such that $\|f - h\|_{\infty} \leq \epsilon_n$. By the triangle inequality, $\|\psi_{\sigma_n} \star f - f\|_{1} \leq \|\psi_{\sigma_n} \star f - \psi_{\sigma_n} \star h\|_{1} + \|\psi_{\sigma_n} \star h - h\|_{1} + \|h - f\|_{1}$. Using $\|\psi_{\sigma_n} \star g\|_1 \leq \|g\|_1$ for any $L_1$ function $g$, we can further bound $\|\psi_{\sigma_n} \star f - f\|_{1}$ from above by $2\epsilon_n + \|\psi_{\sigma_n} \star h - h\|_{\infty}$. It thus remains to show that $\|\psi_{\sigma_n} \star h - h\|_{\infty} \leq \epsilon_n$.

There exists $\xi \in L_2(\mu_{a_n})$ such that $h(t) = \int \exp( i \dotp{\lambda}{t} ) \xi(\lambda) \omega_{a_n}(\lambda) d\lambda$. Clearly, $\hat{h}(\lambda) = \xi(-\lambda) \omega_a(\lambda)$. Since the Fourier transform of $(\psi_{\sigma_n} \star h)$ is $(2 \pi)^d \hat{\psi}_{\sigma_n}  \hat{h}$ and $\hat{\psi}_{\sigma_n}(\lambda) = \hat{\psi}(\sigma_n \lambda)$, we have $\psi_{\sigma_n} \star h(t) = (2 \pi)^d \int \exp(-i \dotp{\lambda}{t}) \hat{\psi}(\sigma_n \lambda) \hat{h}(\lambda) d\lambda$. We can choose $\psi$ in a manner such that $\hat{\psi}$ is compactly supported, equals $(2 \pi)^{-d}$ on $[-1, 1]^d$
and is bounded above by this constant everywhere; see proof of Lemma 4.3 in \cite{van2009adaptive}. Putting together,
\begin{align*}
& | \psi_{\sigma_n} \star h(t) - h(t)|^2 \leq \bigg\{\int_{\|\lambda\| > \sigma_n^{-1}} |\hat{h}(\lambda)|\bigg\}^2 \leq \bigg\{\int \xi(\lambda)^2 \omega_{a_n}(\lambda) d\lambda\bigg\} \, \int_{\|\lambda\| > \sigma_n^{-1}} \omega_{a_n}(\lambda) d\lambda \\
& \leq C \norm{h}^2_{\mb H^{a_n}} \exp\{-\sigma_n^{-2}/(4a_n^2)\} \leq C M_n^2 \exp\{-\sigma_n^{-2}/(4a_n^2)\}  = C a_n^d \exp\{-(\log^{2t_2} n/4)\},
\end{align*}
where $C$ is an absolute constant. The proof follows by noting that $C a_n^d \exp\{-\log^{2t_2} n/4\} \leq
\epsilon_n^2$ whenever $t_2 > 1/2$ (holds for $t_2 =  1/(2- \kappa)$, for $0 < \kappa < 1$).
\subsection{Proof of Corollary \ref{cor:main}}
\noindent{\bf Case $\beta < \alpha$:}  Setting $\sigma_n = n^{-1/(2\beta + d)} \log^{-t_2}n$
for some constant $t_2 \geq 1/(2- \kappa)$, for $0 < \kappa < 2$, $M_n = a_n^{d/2}$, $\tilde{f}_n$ same as in \eqref{eq:fnhat}, $\mathcal{P}_n = M_n \mathbb{H}_1^{a_n} + \epsilon_n \mathbb{B}_1$ with $\epsilon_n = n^{-\beta/ (2\beta + d)} \log^{3t_1/2}n$, $t_1 \geq t_2d/2$, and $\delta_n =\epsilon_n = \epsilon_n'$,   one can verify
(PCS), (BT), (DT), (BS), (DS) exactly as in the proof of Theorem \ref{thm:main}.  (PCN) follows from Lemma 4.3 of \cite{van2009adaptive}.  \\
\noindent{\bf Case $\beta > \alpha$:}  Same as before with $\epsilon_n = n^{-\alpha/ (2\beta + d)} \log^{3t_1/2}n$ for $t_1 \geq t_2d/2$.

\section{Discussion}\label{sec:disc}
The article extends upon previous results on random design regression using Gaussian process priors.
A limitation of the current exposition is the requirement of the knowledge of the smoothness parameter to construct the rescaling sequence.  A natural question is whether one can find a suitable prior on the bandwidth parameter which adapts to the unknown smoothness level as in the fixed design case in \cite{van2009adaptive}.  We propose to resolve this issue as a part of future research. Also, our current proof technique would  lead to a sub-optimal rate of posterior convergence for $L_p$ norms with $p \neq 1$. We believe this is due to the use of Talagrand's inequality to construct the test function.  A key requirement to obtain optimal convergence rate is that the {\em variance term} $\sigma_{\mathcal{F}}^2$ in the application of Talagrand's inequality should be at most $O(n)$. This assertion is true only when $p=1$.   Obtaining  convergence rates for integrated $L_p$ norms with $p \neq 1$ is a topic of future research.

\section*{Acknowledgement}
Dr. Pati and Dr. Bhattacharya acknowledge support for this project
from the Office of  Naval Research (ONR BAA 14-0001). Dr. Cheng
acknowledges support from the National Science Foundation (NSF CAREER, DMS -- 1151692, DMS -- 1418042), Simons Foundation (305266) and warm hosting at SAMSI.

\appendix
\section*{Appendix}
\subsection*{Proof of Theorem \ref{thm:main_cond}}

Let $\|f\|_{2,n}$ denote the empirical $L_2$ norm of $f$, so that $\|f\|_{2,n}^2 = n^{-1} \sum_{i=1}^n f^2(X_i)$. Also, define
$$
L_n(f, f_0) = \frac{q^{(n)}(Y_{1:n}, X_{1:n} \mid f)}{q^{(n)}(Y_{1:n}, X_{1:n} \mid f_0)}.
$$
\begin{lemma}\label{lem:An}
Let $A_n$ denote the following event in the sigma-field generated by $(Y_{1:n}, X_{1:n})$:
\begin{align}\label{eq:An}
A_n = \bigg\{(Y_{1:n}, X_{1:n}) : \int L_n(f, f_0) \Pi(df) \geq e^{-n \delta_n^2} \Pi(\|f - f_0\|_{\infty} \leq \delta_n) \bigg\}.
\end{align}
Then, $\bbP_{Y, X}^{f_0}(A_n) \geq 1 - e^{-C n \delta_n^2}$.
\end{lemma}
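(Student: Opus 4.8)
The plan is to run the classical evidence‑lower‑bound argument of \cite{ghosal2000convergence}, but to replace its Chebyshev step by a conditional Gaussian tail bound, which is what yields the \emph{exponential} rate asserted here rather than merely a polynomial one. Write $B = \{f : \|f - f_0\|_{\infty} \le \delta_n\}$; we may assume $\Pi(B) > 0$ (this is guaranteed by (PCN), and if $\Pi(B)=0$ the right‑hand side of the event defining $A_n$ vanishes and there is nothing to prove), and let $\tilde\Pi(\cdot) = \Pi(\cdot \cap B)/\Pi(B)$ be the prior restricted to $B$ and renormalized. Bounding the integral over all $f$ below by the integral over $B$ and applying Jensen's inequality to $\exp$ against the probability measure $\tilde\Pi$,
$$
\int L_n(f,f_0)\,\Pi(df)\ \ge\ \Pi(B)\int_B L_n(f,f_0)\,\tilde\Pi(df)\ \ge\ \Pi(B)\exp\Big(\int_B \log L_n(f,f_0)\,\tilde\Pi(df)\Big)\ =\ \Pi(B)\,e^{-Z},
$$
where $Z := -\int_B \log L_n(f,f_0)\,\tilde\Pi(df)$. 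Since $\Pi(B) = \Pi(\|f-f_0\|_{\infty}\le\delta_n)$, this shows $\{Z \le n\delta_n^2\}\subseteq A_n$, so it suffices to prove $\bbP_{Y,X}^{f_0}(Z > n\delta_n^2)\le e^{-Cn\delta_n^2}$.

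Next I would compute $\log L_n$ explicitly for the Gaussian regression model. Because $q(y,x\mid f) = \phi\{y-f(x)\}q(x)$, the covariate factor $q(X_i)$ cancels in the ratio, and under $Y_i = f_0(X_i)+\epsilon_i$, $\epsilon_i\sim\Gauss(0,1)$, setting $\Delta_i(f):=f_0(X_i)-f(X_i)$ one obtains
$$
-\log L_n(f,f_0)\ =\ \sum_{i=1}^n \log\frac{\phi(\epsilon_i)}{\phi\{\epsilon_i+\Delta_i(f)\}}\ =\ \sum_{i=1}^n\Big\{\epsilon_i\,\Delta_i(f)+\tfrac12\,\Delta_i(f)^2\Big\}.
$$
For $f\in B$ we have $|\Delta_i(f)| \le \|f_0-f\|_{\infty}\le\delta_n$ pointwise. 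Integrating over $f\in B$ against $\tilde\Pi$ and pulling $\epsilon_i$ (which does not depend on $f$) outside the $f$‑integral — the interchange with $\sum_i$ and finiteness of everything being immediate from the uniform bound on $B$ via Tonelli's theorem — gives
$$
Z\ =\ \sum_{i=1}^n \epsilon_i\,U_i + \sum_{i=1}^n V_i,\qquad U_i := \int_B \Delta_i(f)\,\tilde\Pi(df),\quad V_i := \tfrac12\int_B \Delta_i(f)^2\,\tilde\Pi(df),
$$
where $U_i$ and $V_i$ are $\sigma(X_i)$‑measurable with $|U_i|\le\delta_n$ and $0\le V_i\le\delta_n^2/2$.

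Finally, the deterministic term obeys $\sum_i V_i \le n\delta_n^2/2$, so $\{Z > n\delta_n^2\}\subseteq\{\sum_i \epsilon_i U_i > n\delta_n^2/2\}$. Conditionally on $X_{1:n}$ the $\epsilon_i$ remain i.i.d.\ $\Gauss(0,1)$, hence $\sum_i \epsilon_i U_i$ is a centered Gaussian with variance $\sum_i U_i^2 \le n\delta_n^2$, and the standard Gaussian tail bound gives
$$
\bbP_{Y,X}^{f_0}\Big(\sum_{i=1}^n \epsilon_i U_i > \tfrac{n\delta_n^2}{2}\ \Big|\ X_{1:n}\Big)\ \le\ \exp\Big(-\frac{(n\delta_n^2/2)^2}{2\sum_i U_i^2}\Big)\ \le\ \exp\Big(-\frac{n\delta_n^2}{8}\Big),
$$
and taking expectation over $X_{1:n}$ yields $\bbP_{Y,X}^{f_0}(Z > n\delta_n^2)\le e^{-n\delta_n^2/8}$; combined with the first paragraph this proves the claim with $C=1/8$ (any smaller $C$ works as well). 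Every step is either an explicit Gaussian computation or a one‑line inequality, so I do not anticipate a genuine obstacle; the only point meriting care is not to settle for the polynomial $O\big((n\delta_n^2)^{-1}\big)$ bound produced by Chebyshev in the generic version of this lemma — the exponential rate is available precisely because the noise enters $-\log L_n$ only through the term $\epsilon_i\Delta_i(f)$, which is linear in $\epsilon_i$, so the relevant conditional law is exactly Gaussian rather than merely of controlled variance.
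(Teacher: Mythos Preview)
Your proof is correct and takes essentially the same route as the paper's: the paper quotes Lemma~14 of \cite{van2011information} for the conditional bound $\bbP_{Y\mid X}^{f_0}\big\{\int L_n\,\Pi(df)\ge e^{-n\delta_n^2}\Pi(\|f-f_0\|_{2,n}\le\delta_n)\big\}\ge 1-e^{-n\delta_n^2/8}$ and then passes to the sup-norm ball via $\|\cdot\|_{2,n}\le\|\cdot\|_\infty$, whereas you simply unpack that cited lemma (Jensen on the restricted prior, explicit Gaussian log-likelihood, conditional Gaussian tail bound) and apply it directly to the sup-norm ball. The resulting constant $C=1/8$ coincides with the paper's.
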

\begin{proof}
Clearly, $\bbP_{Y,X}^{f_0}(A_n) = \bbE_X^{f_0} [\bbP_{Y \mid X}^{f_0}(A_n)]$. By Lemma 14 of \cite{van2011information}, $\bbP_{Y \mid X}^{f_0}\{ \int L_n(f, f_0) \Pi(df) \geq e^{-n \delta_n^2} \Pi(\|f - f_0\|_{2,n} \leq \delta_n)\} \geq 1 - e^{-n \delta_n^2/8}$. The conclusion follows by noting that $\Pi(\|f - f_0\|_{\infty} < \delta_n) \leq \Pi(\|f - f_0\|_{2,n} < \delta_n)$.
\end{proof}

\begin{lemma}\label{lem:test}
There exists a test function $\Phi_n$ for $H_0: f = f_0$ vs $H_1: f \in U_n \cap \m P_n$ such that
\begin{align}
& \bbE_{Y, X}^{f_0} \Phi_n \leq e^{-C n \delta_n^2},  \\
& \sup_{f \in U_n \cap \m P_n} \bbE_{Y, X}^{f} (1 - \Phi_n) \leq e^{-C n \delta_n^2}.
\end{align}
for some absolute constant $C$.
\end{lemma}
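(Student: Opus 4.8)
The plan is to build $\Phi_n$ directly out of the estimator $\tilde f_n$ furnished by Theorem~\ref{thm:main_cond}, turning the bias bounds (BT), (BS) and the exponential deviation bounds (DT), (DS) into a test by the triangle inequality; this is the general strategy of \cite{gine2011rates}. Concretely, I would take
$\Phi_n = \mathbbm{1}\{\norm{\tilde f_n - f_0}_1 > M\epsilon_n'/2\}$,
so that $\Phi_n$ rejects $H_0$ exactly when $\tilde f_n$ lands more than $M\epsilon_n'/2$ from $f_0$ in the $L_1$ metric.

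For the Type~I error, work under $q^{(n)}(\cdot\mid f_0)$ and split $\norm{\tilde f_n - f_0}_1 \leq \norm{\tilde f_n - \bbE_{Y,X}^{f_0}\tilde f_n}_1 + \norm{\bbE_{Y,X}^{f_0}\tilde f_n - f_0}_1$. By (BT) the second summand is strictly below $\epsilon_n'$, so on $\{\Phi_n = 1\}$ the first summand exceeds $(M/2-1)\epsilon_n'$, which is at least $\epsilon_n'$ once $M\geq 4$. Hence $\bbE_{Y,X}^{f_0}\Phi_n \leq \bbP_{Y,X}^{f_0}(\norm{\tilde f_n - \bbE_{Y,X}^{f_0}\tilde f_n}_1 > \epsilon_n') \leq \exp\{-(C+4)n\delta_n^2\}$ by (DT). For the Type~II error, fix $f\in U_n\cap\m P_n$ and work under $q^{(n)}(\cdot\mid f)$; since $\norm{f - f_0}_1 > M\epsilon_n'$, the reverse triangle inequality gives $\norm{\tilde f_n - f_0}_1 \geq M\epsilon_n' - \norm{\tilde f_n - \bbE_{Y,X}^{f}\tilde f_n}_1 - \norm{\bbE_{Y,X}^{f}\tilde f_n - f}_1$, and by (BS) the last term is below $\epsilon_n'$. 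Thus on the event $\{\norm{\tilde f_n - \bbE_{Y,X}^{f}\tilde f_n}_1 \leq \epsilon_n'\}$ one has $\norm{\tilde f_n - f_0}_1 > (M-2)\epsilon_n' \geq M\epsilon_n'/2$ provided $M\geq 4$, i.e.\ $\Phi_n = 1$. Consequently $\{1 - \Phi_n = 1\}\subseteq\{\norm{\tilde f_n - \bbE_{Y,X}^{f}\tilde f_n}_1 > \epsilon_n'\}$, so (DS) yields $\bbE_{Y,X}^{f}(1-\Phi_n)\leq\exp\{-(C+4)n\delta_n^2\}$ uniformly over $f\in U_n\cap\m P_n$. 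Since $\exp\{-(C+4)n\delta_n^2\}\leq\exp\{-Cn\delta_n^2\}$, both displayed bounds follow.

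I do not expect a genuine obstacle here: given (BT)--(DS), the lemma is an exercise with the triangle inequality, and the substantive work lies upstream in verifying those conditions through the kernel-estimator concentration bounds (Lemmata~\ref{lem:devygx}--\ref{lem:devx} and Proposition~\ref{prop:l2normbd}). The one point requiring care is the separation constant: the argument forces $M$ to be large enough that $M/2-1\geq 1$ and $M-2\geq M/2$ (so $M\geq 4$ suffices), which is compatible with the ``sufficiently large $M$'' hypothesis of Theorem~\ref{thm:main}; for cleaner bookkeeping one could instead reject at threshold $\rho M\epsilon_n'$ for a fixed $\rho\in(0,1)$ and track the corresponding lower bound on $M$.
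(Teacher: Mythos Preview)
Your proposal is correct and matches the paper's own proof essentially verbatim: the paper also takes $\Phi_n = \mathbbm{1}\{\norm{\tilde f_n - f_0}_1 > M\epsilon_n/2\}$ and simply states that the error bounds follow from (BT), (DT) and (BS), (DS). You have merely spelled out the triangle-inequality bookkeeping (and the implicit constraint $M\geq 4$) that the paper leaves to the reader.
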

\begin{proof}
Let $\Phi_n = 1(\| \tilde{f}_n - f_0\|_1 > M \epsilon_n/2)$. The error bounds follow from (BT), (DT) and (BS), (DS). 
\end{proof}

Using a standard line of argument for establishing convergence rates in Bayesian non-parametric models \citep{ghosal2000convergence}, we have $\bbE_{Y, X}^{f_0} \Pi(U_n \mid Y_{1:n}, X_{1:n}) \leq \sum_{i=1}^4 b_{in}$, where $b_{1n} = \bbE_{Y, X}^{f_0} \Phi_n$, $b_{2n} = e^{n \delta_n^2} \sup_{f \in U_n \cap \m P_n} \bbE_{Y, X}^{f} (1 - \Phi_n) /\Pi(\|f - f_0\|_{\infty} < \delta_n)$, $b_{3n} =e^{n \delta_n^2} \Pi(\m P_n^c)/\Pi(\|f - f_0\|_{\infty} < \delta_n)$ and $b_{4n} = \bbP_{Y, X}^{f_0}(A_n^c)$.  The Theorem then follows from Lemmas \ref{lem:An}, \ref{lem:test} and Conditions (PCS) and (PCN).

\bibliographystyle{biometrika}
\bibliography{Xbib9}
\end{document}